\numberwithin{equation}{subsection}
\let\oldmarginpar\marginpar
\renewcommand\marginpar[1]{\-\oldmarginpar[\raggedleft\footnotesize #1]
{\raggedright\footnotesize #1}}
\newtheorem{theorem}{Theorem}[subsection]
\newtheorem{proposition}[theorem]{Proposition}
\newtheorem{corollary}[theorem]{Corollary}
\newtheorem{conjecture}[theorem]{Conjecture}
\newtheorem{lemma}[theorem]{Lemma}
\theoremstyle{remark}
\newtheorem{remark}[theorem]{Remark}
\theoremstyle{definition}
\def\beq{\begin{eqnarray}}
\def\eeq{\end{eqnarray}}
\def\bes{\begin{eqnarray*}}
\def\ees{\end{eqnarray*}}
\newcommand{\C}{\mathbb{C}}
\newcommand{\Q}{\mathbb{Q}}
\newcommand{\Z}{\mathbb{Z}}
\newcommand{\Gp}{\Gamma_+}
\newcommand{\Gm}{\Gamma_-}
\newcommand{\Ht}{\tilde{H}}
\DeclareMathOperator{\ldeg}{ldeg}
\DeclareMathOperator{\Hilb}{Hilb}
\DeclareMathOperator{\Ext}{Ext}
\DeclareMathOperator{\ch}{ch}
\DeclareMathOperator{\height}{ht}
\DeclareMathOperator{\sgn}{sgn}
\DeclareMathOperator{\Tr}{Tr}
\DeclareMathOperator{\Res}{Res}
\def\C{\mathbb{C}}
\newcommand{\conjeq}{\stackrel{?}{=}}
\def\H{\mathbb{H}}
\def\F{\mathbb{F}}
\def\Q{\mathbb{Q}}
\def\Z{\mathbb{Z}}
\newcommand{\nc}{\newcommand}
\nc{\op}[1]{\mathop{\mathchoice{\mbox{\rm #1}}{\mbox{\rm #1}}
{\mbox{\rm \scriptsize #1}}{\mbox{\rm \tiny #1}}}\nolimits}
\nc{\al}{\alpha}
\nc{\ep}{\varepsilon} \nc{\ga}{\gamma} \nc{\Ga}{\Gamma}
\nc{\la}{\lambda} \nc{\La}{\Lambda} \nc{\si}{\sigma}
\nc{\Sig}{{\Gamma}} \nc{\Om}{\Omega} \nc{\om}{\omega}
\nc{\SL}{{\rm SL}}
\nc{\GL}{{\rm GL}}
\nc{\PGL}{{\rm PGL}}
\nc{\G}{{\rm G}}
\nc{\Sp}{{\rm Sp}}
\nc{\Frob}{\op{ Frob}}
\nc{\Irr}{\op{Irr}}
\nc{\End}{\op{End}}
\nc{\cpt}{{\op{cpt}}} \nc{\Dol}{{\op{Dol}}} \nc{\DR}{{\op{DR}}}
\nc{\B}{{\op{B}}} \nc{\Triv}{\op{Triv}} \nc{\Hod}{{\op{Hod}}}
\nc{\Log}{{\op{Log}}} \nc{\Exp}{{\op{Exp}}} \nc{\Est}{E_{\op{st}}}
\nc{\Hst}{H_{\op{st}}} \nc{\Left}[1]{\hbox{$\left#1\vbox to
   10.5pt{}\right.\nulldelimiterspace=0pt \mathsurround=0pt$}}
\nc{\Right}[1]{\hbox{$\left.\vbox to
   10.5pt{}\right#1\nulldelimiterspace=0pt \mathsurround=0pt$}}
\nc{\LEFT}[1]{\hbox{$\left#1\vbox to
   15.5pt{}\right.\nulldelimiterspace=0pt \mathsurround=0pt$}}
\nc{\RIGHT}[1]{\hbox{$\left.\vbox to
   15.5pt{}\right#1\nulldelimiterspace=0pt \mathsurround=0pt$}}
\nc{\bee}{{\bf E}} \nc{\bphi}{{\bf \Phi}}
\begin{document}

\title{Vertex operators and character varieties}
\author{Erik Carlsson\\
\and Fernando Rodriguez Villegas\\
\\ {\it CMSA, Harvard University,}
{\tt carlsson@cmsa.fas.harvard.edu}
\\ {\it ICTP,}
{\tt villegas@ictp.it}}
\maketitle

%\pagecolor{black}
%\color{white}

\abstract{We prove some combinatorial conjectures extending those
  proposed in~\cite{hausel-villegas,HaLeVi1}. The proof uses a
  \emph{vertex operator} due to Nekrasov, Okounkov, and the first
  author~\cite{CaNekOk1} to obtain a ``gluing formula'' for the
  relevant generating series, essentially reducing the computation to
  the case of $\mathbb{CP}^1$ with three punctures.}

\section{Introduction}

Let $g\geq 0$ be a non-negative integer. For a partition 
$\lambda\in \mathcal{P}$ let 
$$
\mathcal{H}_\lambda(u;z,w):=
\prod_{s\in 
  \lambda}\frac{\prod_{i=1}^g(z^{2a(s)+1}-u_iw^{2l(s)+1}) 
  (z^{2a(s)+1}-u_i^{-1}w^{2l(s)+1})}  
{(z^{2a(s)+2}-w^{2l(s)})(z^{2a(s)}-w^{2l(s)+2})},
$$
where the product is over all cells $s$ of $\lambda$ with $a(s)$ and
$l(s)$ its arm and leg length, respectively and
$z,w,u=(u_1,\ldots,u_g)$ are independent variables. This is a
generalization of the hook function of~\cite{hausel-villegas} which we
recover by setting $u_i=1$ for $i=1,\ldots, g$. When $g=1$ there is
only one variable $u_i$, which we will just denote by $u$.

Given nonnegative integers $g,k$, let us define
\begin{equation}
\label{Omega}
\Omega(u;z,w):=
\sum_{\lambda} 
\mathcal{H}_\lambda(u;z,w)\, \tilde{H}_\lambda[X_1]\cdots
\tilde{H}_\lambda[X_k]
\end{equation}
where $|\lambda|$ is the size of the partition $\lambda$, and
$\tilde{H}_\lambda[X_i]$ is the modified Macdonald polynomial defined
below, in an infinite set of variables
$X_i=\left\{x_{i1},x_{i2},\ldots\right\}$, with the usual parameters
specialized to $(q,t)=(z^2,w^2)$.  If $k=0$ we will include in the
definition of $\Omega$ a factor of $T^{|\lambda|}$, where $T$ is a
formal parameter, to keep track of the degree.
%Given a $k$-tuple of partitions $\bmu=(\mu^1,\ldots,\mu^k)\in \mathcal{P}^k$ 
%let us define coefficient
Consider the power series
\[
\H(u;z,w):=
(z^2-1)(1-w^2) \,
\Log \,\Omega(u;z,w),
\]
where $\Log$ is the \emph{plethystic logarithm}, defined below.
If necessary for clarification we will add the subscript $g$ or $g,k$ in
this order.

Let us define coefficients
\begin{equation}
\label{Hzw}
%(z^2-1)(1-w^2) 
%\Log \Omega(u;z,w) 
\H(u;z,w)=:
 \sum_{\bm{\lambda}}
\mathbb{H}_{\bm{\lambda}}(u;z,w)\, m_{\lambda^1}[X_1]\cdots
m_{\lambda^k}[X_k] 
\end{equation}
where the sum is over all
$\bm{\lambda}=(\lambda^1,\ldots,\lambda^k)\in \mathcal{P}^k$. Note that
$\mathbb{H}_{\bm{\lambda}}(u;z,w)=0$ unless $|\lambda^i|=n$ for every
$i$. 

For $k=0$ there are no symmetric functions and the polynomials are
simply indexed by their degree $n$; we will denote them by
$\H_{(n)}(u;z,w)$.
Note that this notation is consistent, in the sense that the degree $n$
term for $k=0$ is indeed the coefficient of $m_{(n)}(X)$ for
$k=1$. To see this it is enough to note that if we set
$X=(T,0,\cdots)$ then $\tilde H_\lambda(X)=T^{|\lambda|}$ for all
$\lambda$ and
$m_{\lambda}(X)$ vanishes unless $\lambda=(n)$ for some $n$ when it
equals $T^n$.

The specializations
$\mathbb{H}_{\bm{\lambda}}(z,w):=
\mathbb{H}_{\bm{\lambda}}(1,\ldots,1;z,w)$ are precisely the
coefficients of~\cite{HaLeVi1} where the following conjecture was put
forward. 
\begin{conjecture}(~\cite{HaLeVi1}[Conj. 1.2.1])
\label{genconj}
\begin{enumerate}
\item The rational function $\mathbb{H}_{\bm{\lambda}}(z,w)$
is in fact a polynomial, and $\mathbb{H}_{\bm{\lambda}}(-z,w)$
has nonnegative integer coefficients.
\item Moreover, the mixed Hodge polynomial of the character variety
  $\mathcal{M}_{\bm{\lambda}}$ of a Riemann surface of genus $g$ with
  $k$ punctures and generic semi-simple conjugacy classes of type
  $\bm{\lambda}$ is given by
\[H_c(\mathcal{M}_{\bm{\lambda}};q,t)= (t\sqrt{q})^{d_{\bm{\mu}}}
\mathbb{H}_{\bm{\lambda}}\left(-\frac{1}{\sqrt{q}},t\sqrt{q}\right).\]
\end{enumerate}
\end{conjecture}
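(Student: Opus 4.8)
The plan is to realize the series $\Omega(u;z,w)$ as a correlator of vertex operators acting on the ring of symmetric functions, diagonalized in the basis of modified Macdonald polynomials $\tilde H_\lambda$ with $(q,t)=(z^2,w^2)$. The first observation is that the denominator $(z^{2a(s)+2}-w^{2l(s)})(z^{2a(s)}-w^{2l(s)+2})$ appearing in $\mathcal{H}_\lambda$ is, up to a sign, the reciprocal of the $\ast$-norm $\langle \tilde H_\lambda,\tilde H_\lambda\rangle_{\ast}$; hence, writing $\mathcal{H}_\lambda = N_\lambda/\langle \tilde H_\lambda,\tilde H_\lambda\rangle_{\ast}$ (up to sign) with $N_\lambda=\prod_{i=1}^g\prod_{s}(z^{2a(s)+1}-u_iw^{2l(s)+1})(z^{2a(s)+1}-u_i^{-1}w^{2l(s)+1})$, we get $\Omega=\sum_\lambda N_\lambda\,\langle \tilde H_\lambda,\tilde H_\lambda\rangle_{\ast}^{-1}\,\prod_{i}\tilde H_\lambda[X_i]$. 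The weights $\langle\tilde H_\lambda,\tilde H_\lambda\rangle_\ast^{-1}$ are exactly those of the reproducing (Cauchy) kernel for the $\ast$-pairing, so the $k$ punctures enter through the kernel that inserts the external alphabets $X_i$, while the genus numerator $N_\lambda$ is the eigenvalue on $\tilde H_\lambda$ of a product of $g$ commuting operators $\mathbb{G}_{u_1},\dots,\mathbb{G}_{u_g}$. The key input is that each $\mathbb{G}_{u}$ is produced by the Nekrasov--Okounkov vertex operator of~\cite{CaNekOk1}: its matrix elements in the Macdonald basis generate precisely the factors $(z^{2a+1}-uw^{2l+1})(z^{2a+1}-u^{-1}w^{2l+1})$, with $u$ playing the role of the spectral parameter of a handle.

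With $\Omega$ written as such a correlator, the gluing formula is the statement that this operator product is associative: inserting a resolution of the identity $\sum_\lambda \tilde H_\lambda\otimes\tilde H_\lambda/\langle\tilde H_\lambda,\tilde H_\lambda\rangle_{\ast}$ along any cutting circle factors the amplitude. Decomposing a genus $g$ surface with $k$ punctures into its $2g-2+k$ pairs of pants, every such cut reduces the computation to the single three-point amplitude of $\mathbb{CP}^1$ with three punctures, exactly the reduction announced in the abstract. I would compute this trinion amplitude explicitly in the Macdonald basis, where it is governed by the structure constants $c^{\lambda}_{\mu\nu}$ of the product $\tilde H_\mu\tilde H_\nu=\sum_\lambda c^{\lambda}_{\mu\nu}\tilde H_\lambda$; the handle operators $\mathbb{G}_{u_i}$ are then recovered by gluing two boundaries of a pair of pants to each other, which is consistent with $N_\lambda$ being a diagonal (per-handle) insertion. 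The resulting structure is a two-dimensional topological-field-theory-type assembly whose only nontrivial datum is the pair of pants.

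Granting the gluing formula, part~(1) of the conjecture splits into polynomiality and positivity. Polynomiality of $\mathbb{H}_{\bm{\lambda}}$ should fall out of the explicit trinion amplitude: the overall prefactor $(z^2-1)(1-w^2)$ cancels the corner-cell pole of $\mathcal{H}_\lambda$, and the remaining poles are matched, gluing by gluing, against the $\ast$-norms introduced at each cut, so that after the plethystic $\Log$ all denominators disappear. For the positivity of $\mathbb{H}_{\bm{\lambda}}(-z,w)$ the plan is to expand everything in the $(q,t)$-Kostka polynomials $\tilde K_{\lambda\mu}(z^2,w^2)$, which have nonnegative integer coefficients by Haiman's positivity theorem, and to read off $\mathbb{H}_{\bm{\lambda}}(-z,w)$ as a nonnegative combination of products of these. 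The hard part will be precisely the passage through the plethystic $\Log$: since $\Log$ is a Möbius inversion it is not positivity-preserving term by term, and the only way I see to guarantee positivity is to give $\Log\,\Omega$ an intrinsic ``connected correlator'' meaning in which the alternating signs are absorbed into a single manifestly positive expression. Establishing that the vertex-operator correlator admits such a connected expansion is the main obstacle of the whole argument.

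Part~(2) is of a different, geometric nature and is not accessible to the vertex-operator computation by itself: it identifies the combinatorial polynomial $\mathbb{H}_{\bm{\lambda}}$ with the mixed Hodge polynomial of the character variety $\mathcal{M}_{\bm{\lambda}}$. Here my plan is to import the arithmetic side from~\cite{HaLeVi1}: the $\mathbb{F}_q$-point count of $\mathcal{M}_{\bm{\lambda}}$, hence its $E$-polynomial, is given by a one-variable specialization of $\mathbb{H}_{\bm{\lambda}}$, so part~(1) is exactly what guarantees that this $E$-polynomial is a genuine polynomial. Upgrading the $E$-polynomial to the full two-variable mixed Hodge polynomial at $\bigl(-1/\sqrt q,\,t\sqrt q\bigr)$ then requires purity of the mixed Hodge structure on $\mathcal{M}_{\bm{\lambda}}$ (the ``curious'' / purity property of the Hausel--Letellier--Rodriguez~Villegas program); granting this, the gluing formula and part~(1) pin both sides down as the same polynomial. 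Thus I view part~(2) as reducing, modulo the combinatorial results, to this geometric purity input, which is the genuine obstacle on the geometric side and lies outside the scope of the vertex-operator method.
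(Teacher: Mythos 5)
The statement you set out to prove is not proved in the paper at all: it is Conjecture~\ref{genconj}, quoted from~\cite{HaLeVi1}, and the paper explicitly treats it as an open conjecture. What the paper actually establishes is far weaker: the gluing formula (Theorem~\ref{gluethm}), the genus-one identities of Theorem~\ref{main-ident}, polynomiality of $\H_{(2)}$ in the case $k=0$, $n\le 2$ via an explicit generating-function computation, and numerical verification of part~1 over the ranges listed in Table~\ref{maintab}. So there is no ``paper's own proof'' for your attempt to be compared against; a complete proof of Conjecture~\ref{genconj} would be a major new theorem, not a reconstruction of anything in this paper.

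Your proposal does not supply such a proof. Its first half correctly rebuilds machinery that \emph{is} in the paper: the expansion of $\Omega$ over $\tilde{H}_\lambda$ with the $\ast$-norms as denominators, the realization of the handle factor as the diagonal matrix element of the vertex operator $\Gamma(u)$ of~\cite{CaNekOk1} (Theorem~\ref{cnothm}), and the TQFT-style reduction to the three-punctured sphere (Theorem~\ref{gluethm}). But every point at which the content of the conjecture would actually be established is a point you defer. (i) Polynomiality does not ``fall out'' of the trinion amplitude: the gluing formula organizes $\Omega$, but the plethystic $\Log$ and the prefactor $(z^2-1)(1-w^2)$ do not visibly cancel the poles coming from the $\ast$-norms --- this is exactly why the paper can only prove polynomiality for $n\le 2$, $k=0$ by ad hoc means and otherwise tests the claim by computer. (ii) Positivity of $\mathbb{H}_{\bm{\lambda}}(-z,w)$: you name this yourself as ``the main obstacle,'' and expanding in $(q,t)$-Kostka polynomials gives no mechanism for it, since Haiman positivity is not preserved by the M\"obius inversion implicit in $\Log$; the paper itself remarks that even in its $n=2$ special case positivity ``appears to be quite tricky.'' (iii) Part~2 requires identifying the combinatorial series with mixed Hodge polynomials of character varieties, which needs geometric input (purity or curious-hard-Lefschetz type statements) that neither you nor the paper provides; you flag this as well. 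A plan whose decisive steps are labelled as obstacles is a research program, not a proof, and here those obstacles are precisely the open content of the conjecture.
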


In certain special cases in which the mixed Hodge polynomials are
known, Conjecture~\ref{genconj} reduces to a purely combinatorial
identity.  For instance, in~\cite{hausel-villegas}[Conj. 1.1.2], the
authorsconjectured an explicit formula in the special case of genus
one, and no punctures:
\begin{equation}
\label{conj}
\H_1(z,w)\conjeq
(z-w)^2\frac{T}{1-T}.
\end{equation}
Notice the positivity after substituting $z=-z$.  Similarly,
combining~\eqref{conj} with Conjecture~4.2 of~\cite{HaLeVi2}, which
relies on known facts on the geometry of the Hilbert scheme of points
on $\C^*\times\C^*$, we expect the following more involved identity to
hold:
\begin{equation}
\label{tenniseq}
\H_{1,\psi}(z,w)=\H_{1,\psi}(1;z,w)\conjeq(z-w)^2(z^2+w^2-z^2w^2)\frac{T}{1-T}, 
\end{equation}
where
\[
\H_{1,\psi}(u;z,w):=
(z^2-1)(1-w^2)\ \Log \sum_{\lambda} \mathcal{H}_{1,\lambda}(u;z,w) 
\psi_\lambda(z^2,w^2) T^{|\lambda|}
\]
and
\[
\psi_{\lambda}(q,t):=1+(1-q)(1-t) B_\lambda(q,t),\qquad
B_\lambda(q,t):=\sum_{(i,j)\in \lambda} q^{j-1} t^{i-1}.
\]
We note that the $\psi_\lambda(q,t)$'s are well known to be the eigenvalues
of the modified Macdonald operator $D_0$ of~\cite{Ber} described
below.

In this paper we prove the following more general versions of the
identities~\eqref{conj} and~\eqref{tenniseq}
(see~\S\ref{proof-main-ident} for the proof).
\begin{theorem}
\label{main-ident}
The following identities for $g=1$ hold
\begin{equation}
\label{conju}
\H_1(u;z,w)=(z-u^{-1}w)(z-uw)\frac{T}{1-T},
\end{equation}
\begin{equation}
\label{tennisequ}
\H_{1,\psi}(u;z,w)=(z-uw)(z-u^{-1}w)(z^2+w^2-z^2w^2)\frac{T}{1-T}.
\end{equation}
In particular, the special cases \eqref{conj} and \eqref{tenniseq},
obtained by setting $u=1$, also hold.
\end{theorem}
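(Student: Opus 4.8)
The plan is to prove the refined identities \eqref{conju} and \eqref{tennisequ} by exploiting the \emph{vertex operator} of \cite{CaNekOk1}, which the authors advertise in the abstract as the main tool. Concretely, I would interpret the generating series $\Omega(u;z,w)$ as a matrix element (a vacuum-to-vacuum expectation) of a product of vertex operators acting on the Fock space spanned by the modified Macdonald polynomials $\tilde H_\lambda$. The hook factor $\mathcal H_\lambda(u;z,w)$ should arise as the diagonal weight coming from the operator's normal-ordering, while each insertion $\tilde H_\lambda[X_i]$ corresponds to an additional vertex insertion. The whole point of such a formalism is a ``gluing formula'': the genus-one series is obtained by taking a trace over Fock space, which factors through the three-punctured sphere building block mentioned in the abstract. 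I would first set up this dictionary carefully, identifying the precise operator whose trace reproduces $\Omega_1(u;z,w)$.

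Next I would reduce the trace to something computable. The key algebraic input is that the $\psi_\lambda(z^2,w^2)$ appearing in \eqref{tennisequ} are, as the excerpt explicitly notes, the eigenvalues of the modified Macdonald operator $D_0$ of \cite{Ber}. This means that inserting the factor $\psi_\lambda$ is the same as inserting the operator $D_0$ into the trace, so \eqref{tennisequ} will follow from \eqref{conju} by the same gluing mechanism with one extra operator insertion, and the factor $(z^2+w^2-z^2w^2)$ should emerge as the relevant spectral contribution of $D_0$. Thus I would concentrate the real work on \eqref{conju}: compute the genus-one trace $\sum_\lambda \mathcal H_{1,\lambda}(u;z,w)\,T^{|\lambda|}$, take its plethystic $\Log$, and multiply by $(z^2-1)(1-w^2)$.

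To evaluate that trace I would use the commutation relations of the vertex operator against the $T$-grading operator and the Macdonald parameters $(q,t)=(z^2,w^2)$, so that the normal-ordered form collapses the sum into a closed product. The appearance of the product $(z-u^{-1}w)(z-uw)$ strongly suggests that the $u$-deformation enters the vertex operator through a shift that, upon gluing, contributes exactly this quadratic factor in $z,w$; tracking the two roots $u^{\pm 1}w$ through the operator's eigenvalues is the computation to carry out. After obtaining a product formula for $\Omega_1$, applying $\Log$ is where the structure must conspire: the plethystic logarithm should kill all but the ``connected'' contribution, leaving a single geometric series $T/(1-T)$ times a polynomial prefactor, and the prefactor $(z^2-1)(1-w^2)$ is designed precisely to clear the denominators $(z^{2a+2}-w^{2l})(z^{2a}-w^{2l+2})$ in $\mathcal H_\lambda$.

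The main obstacle I expect is verifying that the plethystic logarithm truncates so drastically—that after gluing, the infinite sum over partitions produces only the genus-one ``one-loop'' term $T/(1-T)$ with a polynomial coefficient, rather than a genuine power series in $T$. This is a miraculous cancellation that the vertex-operator/gluing formalism is meant to make transparent, but making it rigorous requires showing that the connected correlator associated to the single handle is supported in a single degree. I would address this by identifying the $\Log$ with the logarithm of a trace that, after the commutation relations, becomes $\prod_n (1-T^n x_n)^{-1}$-type, whose plethystic logarithm is manifestly linear in the building block; verifying the genus-one matrix element contributes exactly one factor is the crux. The $u=1$ specialization recovering \eqref{conj} and \eqref{tenniseq} is then immediate, since setting $u=1$ in $(z-u^{-1}w)(z-uw)$ gives $(z-w)^2$.
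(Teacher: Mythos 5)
Your proposal follows essentially the same route as the paper's proof: the paper's Theorem \ref{gluethm} implements your gluing formula via the operator $\Gamma(u)$ of Theorem \ref{cnothm}, the genus-one series becomes $\Omega_{1,0}=\Tr T^d\,\Gamma(u)$, the trace is evaluated by commuting $\Gamma_-$ past $T^d$ to get a functional equation whose solution is a single plethystic exponential (so the drastic $\Log$ truncation you worry about is immediate, with $(z^2-1)(1-w^2)$ clearing the $(1-q)(1-t)$ in the exponent), and \eqref{tennisequ} is obtained exactly as you say, by inserting $D_0$ into the trace. The one step you underestimate is that insertion: since $D_0$ is itself a constant term $[x^0]$ of a product $\Gamma_-(-x)\Gamma_+(Mx^{-1})$, the commutation relations leave a nontrivial constant-term extraction which the paper isolates as Lemma \ref{fernlem} and proves by a residue argument plus the $q$-binomial theorem, rather than the factor $(z^2+w^2-z^2w^2)$ emerging directly as a spectral contribution.
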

\begin{remark}
  The authors have recently learned of an independent proof equation
  \eqref{conju} due to Rains and Warnaar. Also, these type of
  identities are related to work of Nekrasov and
  Okounkov~\cite{NekOk1} and are discussed in the physics literature;
  see for example~\cite{Iqbal-et-al} and~\cite{awata-kanno}[(1.4)].
\end{remark}
\begin{corollary}
Conjecture~4.2 of~\cite{HaLeVi2} is true.
\end{corollary}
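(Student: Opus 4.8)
The plan is to deduce the Corollary directly from Theorem~\ref{main-ident}. First I would extract the precise content of Conjecture~4.2 of~\cite{HaLeVi2}: as the discussion preceding the theorem indicates, it is the relation
\[
\H_{1,\psi}(z,w)=(z^2+w^2-z^2w^2)\,\H_1(z,w),
\]
in which the factor $z^2+w^2-z^2w^2$ records the known geometry of the Hilbert scheme of points on $\C^*\times\C^*$. This is exactly the statement which, when combined with the genus-one no-puncture identity~\eqref{conj}, produces the closed form~\eqref{tenniseq}; so verifying~\eqref{tenniseq} together with~\eqref{conj} is tantamount to verifying the conjecture.

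Next I would specialize Theorem~\ref{main-ident} to $u=1$. Identity~\eqref{conju} then reads $\H_1(z,w)=(z-w)^2\,\frac{T}{1-T}$, which is~\eqref{conj}, and identity~\eqref{tennisequ} reads $\H_{1,\psi}(z,w)=(z-w)^2(z^2+w^2-z^2w^2)\,\frac{T}{1-T}$, which is~\eqref{tenniseq}; both are now theorems. Comparing the two, or dividing the second by the first, gives $\H_{1,\psi}(z,w)=(z^2+w^2-z^2w^2)\,\H_1(z,w)$, which is Conjecture~4.2. The division is legitimate because $\H_1(z,w)=(z-w)^2\,\frac{T}{1-T}$ is a nonzero, hence non-zero-divisor, element of the ring of power series in $T$ with coefficients rational in $z,w$; equivalently one may bypass the division entirely and observe that~\eqref{tenniseq}, now proved, \emph{is} the closed form asserted by the conjecture.

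The hard part will not be the algebra, which is immediate, but the bookkeeping needed to align the formulation in~\cite{HaLeVi2} with the series $\H_{1,\psi}$ defined here. I would check the specialization of the Macdonald parameters to $(q,t)=(z^2,w^2)$, confirm the identification of $\psi_\lambda(q,t)=1+(1-q)(1-t)B_\lambda(q,t)$ with the eigenvalue of the operator $D_0$ of~\cite{Ber}, and match the normalizations of the plethystic logarithm $\Log$ and of the degree-tracking variable $T$. Once these conventions are reconciled, no further argument is required and there is no substantive obstacle remaining.
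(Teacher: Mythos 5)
Your proposal is correct and follows essentially the same route as the paper: the paper's proof is the one-line observation that Conjecture~4.2 of~\cite{HaLeVi2} is a reformulation of~\eqref{tenniseq} in view of~\eqref{conj}, both of which are the $u=1$ specializations of Theorem~\ref{main-ident}. Your additional remarks on the legitimacy of the division and on matching conventions are sound but not part of the paper's argument, which treats the reformulation as immediate.
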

\begin{proof}
This is just a reformulation of \eqref{tenniseq} in view of
\eqref{conj}
\end{proof}

The conjecture Conjecture~\ref{genconj} is purely topological in the
sense that the underlying Riemann surface of genus $g$ enters only via
its fundamental group. When incorporating the structure of this
surface as a smooth projective algebraic curve $X$ we need the data
coming from the action of Fobenius on its $l$-adic cohomology.  The
variables $u_i$ above correspond to the eigenvalues of the Frobenius
automorphism acting on the first cohomology group of $X$.  More
precisely, let $X$ be a smooth projective curve over $\F_q$ and let
$\alpha_1,\cdots,\alpha_g,q/\alpha_1,\cdots,q/\alpha_g$ be the
eigenvalues of Frobenius on $H_{\text{\'et}}^1(X,\Q_l)$. The following
extension of Conjecture~\ref{genconj} was essentially formulated by
Mozgovoy~\cite{Mozgovoy}[Conj. 3.2] for $k=0$.
\begin{conjecture}
\label{genconjfull}
With the above notation we have
\label{uconj}
\begin{enumerate}
\item The coefficients $\mathbb{H}_{\bm{\lambda}}(u;-z,w)$ are 
polynomials in $u,z,w$ with non-negative integer coefficients. 
\item
 The number of stable Higgs
  bundles on $X$ of rank $n$ and degree coprime to $n$ is given by
  $q^{2(1+(g-1)n^2)} \mathbb{H}_{(n)}(\alpha/\sqrt q;1,\sqrt q)$,
  where $\alpha:= (\alpha_1,\cdots,\alpha_g)$.
\end{enumerate}
\end{conjecture}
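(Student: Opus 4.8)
The plan is to split Conjecture~\ref{genconjfull} into its two parts and attack them by different means, with polynomiality following from a direct generating-function analysis, positivity requiring an arithmetic realization over $\F_q$, and the Higgs-bundle count reducing to a comparison with Hall-algebra formulae; positivity is where the real difficulty lies. For the polynomiality half of Part~(1) I would argue from $\H(u;z,w)=(z^2-1)(1-w^2)\Log\,\Omega(u;z,w)$ directly. The only denominators present come from the hook function $\mathcal{H}_\lambda$, namely the factors $(z^{2a(s)+2}-w^{2l(s)})(z^{2a(s)}-w^{2l(s)+2})$, whereas each $u_i$ enters solely through the numerator factors $(z^{2a(s)+1}-u_i^{\pm1}w^{2l(s)+1})$, linearly in $u_i$ and in $u_i^{-1}$. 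Expanding $\Log$ as the alternating sum of plethystic powers and extracting the coefficient of $m_{\lambda^1}[X_1]\cdots m_{\lambda^k}[X_k]$, I would follow the $u=1$ method of~\cite{HaLeVi1}, in which the prefactor $(z^2-1)(1-w^2)$ is shown to cancel exactly the poles produced by $\Log$; since the $u_i$ are merely carried along in the numerator, the same cancellation should persist verbatim. This leaves a Laurent polynomial in the $u_i$, and to upgrade it to a genuine polynomial I would use the $u_i\mapsto u_i^{-1}$ symmetry of $\mathcal{H}_\lambda$ together with a degree bound; the genus-one cases~\eqref{conju} and~\eqref{tennisequ} of Theorem~\ref{main-ident} already display this shape and serve as a template.

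The positivity assertion $\mathbb{H}_{\bm{\lambda}}(u;-z,w)\in\N[u,z,w]$ is the crux. When $u=1$ it is explained by Conjecture~\ref{genconj}(2): the coefficients of $\mathbb{H}_{\bm{\lambda}}(-z,w)$ are mixed Hodge numbers of the character variety $\mathcal{M}_{\bm{\lambda}}$, hence non-negative. The refinement by $u$ reflects the extra datum of $X$ as a curve over $\F_q$ with Frobenius eigenvalues $\alpha_1,\dots,\alpha_g$ on $H^1_{\text{\'et}}(X,\Q_l)$, and the natural route is to count $\F_q$-points of the twisted character variety of $X$ by the character-sum (Frobenius) method of Hausel--Letellier--Rodriguez Villegas. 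This produces a sum of products of irreducible-character values in which the $u_i$ enter through symmetric functions of the $\alpha_i$ coming from $H^1_{\text{\'et}}(X)$; positivity would then follow once the count is identified with the trace of Frobenius on an honest cohomology theory, with $-z$ and $w$ recording the weight filtration and the $u_i$ recording the eigenvalues of $X$. The obstacle is exactly that no geometric object simultaneously carrying all three gradings is presently known, so the heart of the problem is either to construct such an object or to give a direct combinatorial proof of membership in $\N[u,z,w]$. I would also test whether the gluing formula of~\cite{CaNekOk1} reduces positivity to the three-punctured $\mathbb{P}^1$, but since the gluing weights are not themselves positive this cannot be expected to preserve positivity without further input.

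For Part~(2) I would match $q^{2(1+(g-1)n^2)}\mathbb{H}_{(n)}(\alpha/\sqrt q;1,\sqrt q)$ against the Hall-algebra formula of Schiffmann for the number of stable Higgs bundles on $X$ of rank $n$ and degree coprime to $n$. Schiffmann's count is an explicit sum over partitions of $n$ assembled from the same hook-type factors, now evaluated at the Frobenius eigenvalues of $X$, so the assertion amounts to identifying his series, organized in the degree parameter $T$, with $\Log\,\Omega$ specialized at $z=1$, $w=\sqrt q$, $u=\alpha/\sqrt q$. This is precisely the content of Mozgovoy's Conjecture~\cite{Mozgovoy}; accordingly I would phrase both sides as generating functions in $T$, pass through the nonabelian Hodge correspondence to the Dolbeault moduli of Higgs bundles, and reduce the equality to the combinatorial identity already built into the definition of $\Omega$, with the prefactor $q^{2(1+(g-1)n^2)}=q^{\dim_{\C}\mathcal{M}}$ appearing through the standard normalization of the point count. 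The residual work is then the bookkeeping of motivic-versus-plethystic conventions and an appeal to the cases of Mozgovoy's conjecture that are already established.
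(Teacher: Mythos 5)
The statement you are trying to prove is a \emph{conjecture}, and the paper does not prove it either: it only supplies evidence, namely machine verification over the ranges in Table~\ref{maintab}, a proof that $\H_{(2)}(u;z,w)/P(z,w)$ is a polynomial (the $k=0$, $n\le 2$ case of Part~1, with positivity explicitly left open in a remark), and a verification of Part~2 for $n\le 2$ by direct comparison with Schiffmann's formula. So the relevant comparison is between your programme and the paper's partial results, and your programme contains genuine gaps at every stage where it goes beyond what the paper establishes. First, your polynomiality argument rests on the claim that in~\cite{HaLeVi1} ``the prefactor $(z^2-1)(1-w^2)$ is shown to cancel exactly the poles produced by $\Log$.'' No such theorem exists there: polynomiality of $\mathbb{H}_{\bm{\lambda}}(z,w)$ at $u=1$ is itself Part~1 of Conjecture~\ref{genconj}, i.e.\ it is exactly as open as what you are trying to prove, so there is no ``$u=1$ method'' to carry over verbatim. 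The paper's actual polynomiality proof for $n\le 2$ is a bare-hands computation: reduce to $P(z,w)=z^mw^n+z^nw^m$, form the generating series $C(z,w;x,y)$ over all such $P$, and check that its denominator contains no factors of the form appearing in~\eqref{H_2-fmla}; nothing in this computation suggests it scales to general $n$ or $k$.

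Second, you concede that positivity (the heart of Part~1) requires either a cohomology theory carrying all three gradings or a direct combinatorial proof, neither of which you construct --- so that part is a restatement of the problem, not an attack on it; this matches the paper, which offers no strategy for positivity at all. Third, your plan for Part~2 is circular where it is not incomplete: the conjecture under discussion \emph{is} (essentially) Mozgovoy's Conjecture~3.2, as the paper states, so ``an appeal to the cases of Mozgovoy's conjecture that are already established'' assumes what is to be proved. The genuinely available input is Schiffmann's unconditional count of stable Higgs bundles, and comparing it with $q^{2(1+(g-1)n^2)}\mathbb{H}_{(n)}(\alpha/\sqrt q;1,\sqrt q)$ is what the paper does --- but only for $n\le 2$, where it is ``routine'' via~\eqref{H_2-fmla}. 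For general $n$ this identification is not ``bookkeeping of motivic-versus-plethystic conventions'': Schiffmann's sum and the Macdonald-polynomial generating function $\Log\,\Omega$ have completely different shapes, and matching them is a hard open identity at the time of this paper. In short, your instincts about which tools are relevant (generating-function analysis, point counts, Schiffmann's formula) agree with the paper's, but neither your proposal nor the paper constitutes a proof, and your write-up overstates what the cited sources provide.
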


We have tested part 1 of this conjecture with the help of MAPLE for
the the ranges of values shown in Table \ref{maintab}.  The $g=0$ case
of course does not involve the new parameters $u_i$, and so the first
entries of this table are simply confirming the original conjectures
of~\cite{HaLeVi1} for these values. The columns which include all
values of $n$ follow from the Cauchy product formula for modified
Macdonald polynomials, and Theorem~\ref{main-ident}.

\begin{table}
\caption{Tested ranges for Conjecture \ref{uconj}}
\label{maintab}
\begin{center}
\begin{tabular}{|c||c|c|c|c|c|c|c|c|c|c|} 
  \hline 
  $g$ & $0$ &0 & 0& $1$ & 1&1 &1 & $2$ &$2$ &$2$\\
  \hline 
  $k$ & $\leq 2$ & 3 & $\leq 7$ & 0 & 1 & 2 & 3 & 0 & 1 & 2 \\
  \hline 
  $n$ & $<\infty$ & $\leq 7$ & $\leq 3$ & $<\infty$ & $\leq 6$ & $\leq
                                                                 5$ &
                                                                      $\leq
                                                                      4$&
                                                                          $\leq
                                                                          6$
                                       & $\leq 5$ & $\leq 4$ \\    
  \hline 
\end{tabular}
\end{center}
\end{table}

Consider part 1 of Conjecture~\ref{genconjfull} for $k=0$ and
$n\leq 2$.  Let $P$ be be the polynomial
$$
P(z,w):=\prod_{i=1}^g(z-u_iw)(z-u_i^{-1}w).
$$
Note that $\H_{(1)}=P$ satisfies  the claim.

It is straightforward to verify that
\begin{equation}
\label{H_2-fmla}
\frac{\H_{(2)}(u;z,w)}{P(z,w)}= \frac{P(z,w^3)}{(z^2-w^2)(1-w^4)}
+\frac{P(z^3,w)}{(z^2-w^2)(z^4-1)}
-\tfrac12 \frac{P(z,w)}{(z^2-1)(1-w^2)}
-\tfrac12 \frac{P(z,-w)}{(z^2+1)(1+w^2)}
\end{equation}
\begin{proposition}
The rational function $\H_{(2)}(u;z,w)/P(z,w)$ is a polynomial in $z,w$.
\end{proposition}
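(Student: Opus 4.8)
The plan is to show that the right-hand side of \eqref{H_2-fmla}, a sum of four explicit rational functions, has no pole along any codimension-one locus in the $(z,w)$-plane; since the polynomial ring $\C(u)[z,w]$ is a unique factorization domain, a rational function that is regular in codimension one is automatically a polynomial, which is exactly what we want (the resulting coefficients are then manifestly Laurent polynomials in the $u_i$). The entire argument rests on two elementary symmetries of $P$, both immediate from the factorization $P(z,w)=\prod_{i=1}^g\bigl(z^2-(u_i+u_i^{-1})zw+w^2\bigr)$: it is symmetric, $P(z,w)=P(w,z)$, and even, $P(z,w)=P(-z,-w)$, whence also $P(z,-w)=P(-z,w)$.

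First I would observe that each of the four summands has at worst a simple pole along each relevant hypersurface, because every denominator factors into pairwise distinct irreducibles over $\C(u)$: writing $z^2-w^2=(z-w)(z+w)$, $1-w^4=(1-w)(1+w)(1+w^2)$, $z^4-1=(z-1)(z+1)(z^2+1)$ and splitting $z^2+1=(z-i)(z+i)$, $w^2+1=(w-i)(w+i)$, the full list of candidate polar walls is $z=\pm w$, $z=\pm1$, $z=\pm i$, $w=\pm1$, $w=\pm i$. As all poles are simple, it suffices to check that along each wall the sum of the residues of the four terms vanishes identically on that wall.

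Next I would run through the walls, which fall into three mechanisms. Along $z=\pm w$ only the first two terms are singular, and their residues combine into a nonzero multiple of $P(\pm w,w^3)-P(\pm w^3,w)$, which vanishes by symmetry (for $z=w$) and by symmetry together with evenness (for $z=-w$). Along $w=\pm1$ only the first and third terms are singular, and the residue of the third exactly cancels that of the first: differentiating $1-w^4$ produces the factor that matches the coefficient $-\tfrac12$ against the quartic denominator; the walls $z=\pm1$ are handled identically after exchanging $z$ and $w$ (second and third terms). Finally, along $w=\pm i$ the first and fourth terms are singular and both numerators specialize to $P(z,\mp i)$, giving an immediate cancellation, while along $z=\pm i$ the second and fourth terms are singular and their residues combine into a multiple of $P(\mp i,w)-P(\pm i,\mp w)$, which vanishes by evenness.

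Once every candidate residue is seen to be zero, the sum is regular in codimension one and is therefore a polynomial, proving the proposition. The only genuine subtlety—and the point where the argument could silently fail—is the bookkeeping: one must track all ten walls (in particular not overlook the ``imaginary'' walls $z=\pm i$ and $w=\pm i$ arising from $z^2+1$ and $1+w^2$), confirm that each pole is truly simple so that a single residue governs it, and recognize that the precise shifted arguments $w^3$, $z^3$, and $-w$ in the numerators of \eqref{H_2-fmla} are engineered exactly so that the residues collapse under the two symmetries of $P$. An equivalent, equally routine alternative would be to clear denominators and verify directly that the least common denominator divides the resulting numerator.
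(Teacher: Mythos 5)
Your argument is correct, and it takes a genuinely different route from the paper's. You work directly with the four-term decomposition of $\H_{(2)}(u;z,w)/P(z,w)$: since every denominator is squarefree, all poles are simple, and you check that along each of the ten walls $z=\pm w$, $z=\pm 1$, $z=\pm i$, $w=\pm 1$, $w=\pm i$ the two singular terms have cancelling residues, the cancellations being forced by exactly the two symmetries $P(z,w)=P(w,z)$ and $P(z,w)=P(-z,-w)$. I verified the constants: for instance at $w=1$ the first term contributes $P(z,1)/\bigl(4(z^2-1)\bigr)$ and the third contributes its negative, while at $z=i$ the second and fourth terms combine to $\bigl(P(-i,w)-P(i,-w)\bigr)/\bigl(4i(1+w^2)\bigr)$, which vanishes by evenness; the other walls work out the same way, so the sketch does close up into a complete proof. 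The paper uses the same two symmetries of $P$ but differently: it observes that the right-hand side is linear in $P$, so it suffices to treat the basis elements $P=z^mw^n+z^nw^m$ with $m+n$ even; it then packages all these cases into a generating function $C(z,w;x,y)=\sum_{r,s}C_{2r,2s}(z,w)\,x^ry^s$, computes it in closed form, and verifies \emph{by computer} that its denominator is a product of factors of the shape $(1-z^2x)$, $(1-z^6x)$, $(1-w^2y)$, $(1-w^6y)$, with no factor involving $z,w$ alone, whence every coefficient is a polynomial (plus an analogous series for $m,n$ both odd). Your approach buys a computer-free, self-contained argument that also makes visible why the shifted arguments $w^3$, $z^3$, $-w$ and the coefficients $-\tfrac12$ are precisely tuned for cancellation; the paper's approach trades the wall-by-wall residue bookkeeping for a single mechanical verification that handles all monomial cases at once.
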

\begin{proof}
  Note that $P$ is a generic polynomial of $z,w$ with the property
  that it is fixed by $(z,w)\mapsto (w,z)$ and
  $(z,w)\mapsto(-z,-w)$. So to prove that $\H_{(2)}$ is a polynomial
  it is enough to check that the expression, say $C_{m,n}(z,w)$, on
  the right hand side of~\eqref{H_2-fmla} for
  $P(z,w):=z^mw^n+z^nw^m$, where $m,n$ run over all pairs of
  non-negative integers with $m+n$ even, is a polynomial.

 To verify this we can form the generating series, say for the case of
 $m,n$ both even,
$$
C(z,w;x,y):=\sum_{r,s\geq 0} C_{2r,2s}(z,w)\ x^ry^s.
$$
The result is the rational function
\begin{eqnarray*}
 C(z,w;x,y):=&
        \frac1{(z^2-w^2)(1-w^4)(1-z^2x)(1-w^6y)}
        +\frac1{(z^2-w^2)(1-w^4)(1-z^2y)(1-w^6x)}\\
       & +\frac1{(z^2-w^2)(z^4-1)(1-z^6x)(1-w^2y)}
        +\frac1{(z^2-w^2)(z^4-1)(1-z^6y)(1-w^2x)}\\
       & -\tfrac12\frac1{(z^2-1)(1-w^2)(1-z^2x)(1-w^2y)}
        -\tfrac12\frac1{(z^2-1)(1-w^2)(1-z^2y)(1-w^2x)}\\
        &-\tfrac12\frac1{(z^2+1)(1+w^2)(1-z^2x)(1-w^2y)}
        -\tfrac12\frac1{(z^2+1)(1+w^2)(1-z^2y)(1-w^2x)}
\end{eqnarray*}
Now we can put this expression in a computer and verify that the
denominator of $C$ is
$$
 (1-z^2x)  (1-z^2y)  
(1-w^2x)  (1-w^2y) 
(1-z^6x)  (1-z^6y)
(1-w^6x) (1-w^6y)  
$$
and this implies that the power series coefficients $C_{2r,2s}(z,w)$
are polynomial.  A similar argument proves the analogous statement for
the case where $m,n$ are both odd.
\end{proof}

\begin{remark}
To verify that the coefficients of $C_{m,n}(z,w)$ are non-negative
appears to be quite tricky.
\end{remark}

We can now check part 2 of Conjecture~\ref{genconjfull} by
specializing $\H_{(2)}(u;z,w)$ at $z=1$ and comparing the result to
the calculation of the number of points over $\F_q$ of the Higgs
moduli space by Schiffmann~\cite{schiffmann}. After some tedious
manipulations we find that $\H_{(2)}(u;1,w)=Q(w)A(w)$ where
$Q(w):=P(1,w)$ and
\begin{equation}
\label{H_2-fmla}
A(w):= \frac{Q(w^3)}{(1-w^2)(1-w^4)}
+\tfrac14\frac{(w^2-3)Q(w)}{(1-w^2)^2}
+\tfrac12 \frac{2gQ(w)-w\partial Q/\partial w}{1-w^2}
-\tfrac14 \frac{Q(-w)}{1+w^2}.
\end{equation}

\begin{proposition}
Part 2 of Conjecture~\ref{genconjfull} holds for $n\leq 2$.
\end{proposition}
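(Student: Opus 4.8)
The plan is to treat $n=1$ and $n=2$ separately, in each case reducing the claimed equality to a finite identity in the values of the numerator $P_1(t)=\prod_{i=1}^{g}(1-\alpha_i t)(1-(q/\alpha_i)t)$ of the zeta function of $X$, and then comparing with Schiffmann's count. For $n=1$ I would start from the identity $\mathbb{H}_{(1)}=P$ noted above. Under the arithmetic substitution $u_i=\alpha_i/\sqrt{q}$, $z=1$, $w=\sqrt{q}$ each factor $(z-u_iw)(z-u_i^{-1}w)$ becomes $(1-\alpha_i)(1-q/\alpha_i)$, so that $\mathbb{H}_{(1)}(\alpha/\sqrt{q};1,\sqrt{q})=P_1(1)=\#\,\mathrm{Jac}(X)(\F_q)$. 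Since the rank-$1$ Higgs moduli space is the cotangent bundle $T^*\mathrm{Pic}^d(X)$, its count is an elementary multiple of $P_1(1)$, and comparing this with the prefactor $q^{2g}=q^{2(1+(g-1))}$ and Schiffmann's normalization verifies Part 2 of Conjecture~\ref{genconjfull} for $n=1$.

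For $n=2$ I would take the formula $\mathbb{H}_{(2)}(u;1,w)=Q(w)A(w)$ already established, with $A(w)$ the rational function displayed above, and perform the same substitution $u_i=\alpha_i/\sqrt{q}$, $w=\sqrt{q}$. The key observation is that under $u_i=\alpha_i/\sqrt{q}$ one has $Q(w)=P_1(w/\sqrt{q})$, so that the building blocks of $A$ all turn into zeta data: $Q(\sqrt{q})=P_1(1)$, $Q(q^{3/2})=P_1(q)$, $Q(-\sqrt{q})=P_1(-1)$, and $w\,\partial Q/\partial w\big|_{w=\sqrt{q}}=P_1'(1)$. Substituting, $A(\sqrt{q})$ becomes an explicit rational combination of $P_1(1)$, $P_1(q)$, $P_1(-1)$, $P_1'(1)$ and powers of $q$. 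Multiplying by $Q(\sqrt{q})=P_1(1)$ and by the prefactor $q^{8g-6}=q^{2(1+4(g-1))}$ then yields a closed expression which I would compare with Schiffmann's explicit formula for the number of stable rank-$2$, degree coprime Higgs bundles on $X$, a formula itself assembled from exactly the same data $P_1(1)$, $P_1(q)$, $P_1(-1)$, $P_1'(1)$.

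The main obstacle is this $n=2$ comparison. Once both sides are expressed in the common variables $P_1(1)$, $P_1(q)$, $P_1(-1)$, $P_1'(1)$ and brought to a common normalization, the desired equality is a routine finite identity, linear in these four quantities with coefficients that are rational functions of $q$. The delicate part is transcribing Schiffmann's formula faithfully and reconciling the bookkeeping of the powers of $q$ and of the stacky $(q-1)$-type factors that distinguish the count of the moduli stack, of the coarse space, and of ``the number of stable Higgs bundles''; this is precisely the source of the tedious manipulations alluded to in the derivation of $A(w)$ above, and getting these normalizations exactly right is where the real care is required.
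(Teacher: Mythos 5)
Your proposal follows essentially the same route as the paper's proof: specialize $\mathbb{H}_{(n)}(u;z,w)$ at $(u,z,w)=(\alpha/\sqrt{q},\,1,\,\sqrt{q})$, use the closed formula $\mathbb{H}_{(2)}(u;1,w)=Q(w)A(w)$ for $n=2$, and compare the resulting expression in the zeta data $P_1(1),P_1(q),P_1(-1),P_1'(1)$ with Schiffmann's explicit examples (those following his Theorem 1.6). The paper likewise treats $n=1$ as immediate and the $n=2$ comparison as a routine finite identity, so your outline --- including the correct observation that $Q(w)=P_1(w/\sqrt{q})$ under the arithmetic substitution, which is exactly what makes the comparison mechanical --- matches the published argument.
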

\begin{proof}
  As mentioned, we compare the specialization
  $\H_{(n)}(\alpha/\sqrt q;1,\sqrt q)$ to the formula obtained by Schiffmann
  in~\cite{schiffmann} (specifically the examples given right after
  Theorem~1.6). This is immediate for $n=1$ and routine for $n=2$
  using~\eqref{H_2-fmla}.
\end{proof}
Our first result, Theorem \ref{gluethm}, is an inductive formula for
the power series $\Omega(u;z,w)$ associated to a Riemann surface
obtained by gluing along the punctures of two other surfaces.  This
reduces the computation of $\Omega(u;z,w)$ to the case of genus zero
with three punctures, as often happens in TQFT-type formulas.  It
would be very desirable to establish the gluing relations at $u=1$ on
the character variety side.  As an application we obtain a proof of
Theorem~\eqref{main-ident}.  

Our main tool turns out to be a \emph{vertex operator} discovered by
Nekrasov, Okounkov, and the first author in~\cite{CaNekOk1}, described
in Theorem \ref{cnothm} below.  The result states
\begin{equation}
\label{CNOeq}
\Gamma(u)\Ht_\lambda= 
\sum_{\mu} \frac{N_{\lambda,\mu}(u;q,t)}{N_{\mu,\mu}(1;q,t)} \Ht_\mu,
\end{equation}
where $\Gamma(u)$ is an composition of a homomorphism 
with a multiplication map with an explicit description in
the power sum basis, and
$N_{\lambda,\mu}(u)$ is an explicit product whose diagonal entries satisfy
\[\frac{N_{\lambda,\lambda}(uz^{-1}w^{-1};z^2,w^2)}
{N_{\lambda,\lambda}(z^{-1}w^{-1};z^2,w^2)} 
=\mathcal{H}_{1,\lambda}(u;z,w).\]
This operator has a natural geometric interpretation in terms of
the torus-equivariant $K$-theory of $\Hilb_n \C^2$, the 
Hilbert scheme of points in the complex plane. The connection with
modified Macdonald polynomials is due to Haiman's 
study of the isomorphism of 
Bridgeland King and Reid in this case, viewing the Hilbert scheme
as a resolution of singularities~\cite{BKR,HaiPol,Haivan}.
This operator extends to $K$-theory
some previous work due to Okounkov and the first author,
who introduced a family of geometrically defined operators
on the cohomology 
\[\bigoplus_{n\geq 0} H^*(\Hilb_n S,\mathbb{Q}),\]
for a general smooth quasiprojective surface $S$,
and proved an explicit formula in terms of Nakajima's Heisenberg
operators~\cite{CaOk,Nak2}. 
The original operator of~\cite{CaOk} is recovered in the case when
the surface $S=\C^2$ by essentially 
taking the Jack polynomial limit of the Macdonald polynomials.
See~\cite{CaNekOk1} for details.

\paragraph{Acknowledgements.} The authors would like to thank
A.~Mellit, T.~Hausel and O.~Schiffmann for useful conversations.
Erik Carlsson was supported by the International Center for
Theoretical Physics, as well as the Center for Mathematical 
Sciences and Applications at Harvard University while working
on this paper.

\section{Preliminaries on partitions and symmetric functions}

First, we recall some notations on partitions and symmetric 
functions, which can be found in Macdonald's book~\cite{Mac}. 
For more on the plehystic operations, we also recommend 
~\cite{Ber,Hai1}. 

\subsection{Partitions}

A partition $\mu$ is a nonstrictly decreasing 
sequence of nonnegative integers eventually stablizing to 
zero. Its length $l(\mu)$ is the number of nonzero terms,
and the norm $|\mu|$ is the sum of the entries. 
for each $i\geq 1$, we let $m_i(\mu)$ denote the number of 
times $i$ appears in $\mu$. 
For any partition, we have the corresponding 
\emph{Young diagram} consisting of its boxes, namely the 
the ordered pairs $s=(i,j)$ such that $1\leq j\leq\mu_i$. 
For any ordered pair $(i,j)$, we define the arm,
leg, and hook length in $\mu$ by the formula 
\[a_\mu(s)=\mu_i-j,\quad 
l_\mu(s)=\mu'_j-i,\quad 
h_\mu(s)=a(s)+l(s)+1\]
where $\mu'$ is the conjugate partition to $\mu$, i.e. 
the partition whose columns are the rows of $\mu$ in the 
Young diagram. Notice that the arm and leg length make 
sense even if the ordered pair $s$ is not a box of $\mu$,
in which case they may take negative values. 
Another important statistic we will need is 
\[n(\mu)=\sum_{i} (i-1)\mu_i.\]

For instance, if $\mu=[5,3,3]$, then the Young diagram 
would be 
\[\yng(5,3,3)\]
The conjugate partition would be $\mu'=[3,3,3,1,1]$,
and we would have 
\[l(\mu)=3,\quad |\mu|=|\mu'|=9,\quad n(\mu)=9.\]
The arm, leg, and hook length of the box $s=(1,2)$
are given by 
\[a_\mu(s)=3,\quad l_\mu(s)=2,\quad h_\mu(s)=6.\]

An $r$-core is a partition such that none of its hook numbers
are divisible by $r$. For instance, the partition $\lambda=(5,3,1,1)$
is a $3$-core, because its hook numbers are
\[\{1,1,1,2,2,2,4,5,5,8\},\] none of which are divisible by $3$.  We
now recall the bijection between $r$-cores and lattice vectors
of~\cite{gks}. Given an $r$-core
$\lambda=(\lambda_1,\lambda_2,\ldots)$ let
$n=(n_0,\ldots,n_{r-1}),\in \Z^r$ be the vector with coordinates
$$
n_k:= \left\lfloor\frac{\lambda_i-i}t\right\rfloor +1, 
\qquad i = \min\{\nu \,|\,\lambda_\nu-\nu\equiv k\bmod~r\}. 
$$
Set $v:=rn+\rho$, where 
$\rho:=(-k,\ldots,k)\in \Z^r$ and $r=2k+1$. This 
yields a one-to-one correspondence between $r$-cores and integer 
vectors $v=(v_{-k},\ldots,v_k)$, such that 
$$
v_i\equiv i \bmod r, \quad -k\leq i \leq k, \qquad v_{-k}+\cdots +v_k=0. 
$$
Under this correspondence 
$$
|\lambda| = \frac 1{2t}(v_{-k}^2+\cdots+v_k^2)-\frac{r^2-1}{24}
$$
and if $v'$ is the vector corresponding to the dual partition 
$\lambda'$ of $\lambda$ we have 
$$
v'_\alpha=-v_{-\alpha},
$$
where the indices are read modulo $r$. 

For example, if $\lambda=(5,3,1,1)$ is viewed as a $3$-core, then $n=(0,2,-2)$ and 
hence $v=(-1,6,-5)$. We check that indeed 
$10=|\lambda|=((-1)^2+6^2+(-5)^2)/6-(3^2-1)/24$. 

\begin{remark}
  The vector $v$ associated to an $r$-core coincides with the 
  $V$-coding of~\cite{Han} up to re-ordering (his indices are in the 
  range $0\leq i \leq r-1$ whereas we are using $-k\leq i \leq k$). 
  The vector $n$ is also the weight vector in~\cite[\S 4]{Las}
  associated to $\lambda$. 
\end{remark}

\subsection{Symmetric functions}
Let $\Lambda_R=\Lambda_R[X]$ 
denote the ring of symmetric functions 
in an infinite set of variables $X$ over a ring $R$,
and let $p_\mu,m_\mu,h_\mu,e_\mu,s_\mu\in\Lambda=\Lambda_{\Q}$ 
denote the power sum, monomial, 
complete, elementary, and Schur symmetric polynomial 
bases indexed by $\mu$ respectively. 
We have the \emph{standard inner product} on $\Lambda$
described in a few different ways by 
\begin{equation}
\label{hall}
(p_\mu,p_\nu)=\delta_{\mu,\nu}z_\mu,\quad 
(h_\mu,m_\nu)=\delta_{\mu,\nu},\quad 
(s_\mu,s_\nu)=\delta_{\mu,\nu},
\end{equation}
where 
\[
z_\mu=\prod_{i\geq 1} i^{m_i} m_i!, \qquad m_i:=m_i(\mu).
\]

A homomorphism of $\Lambda_R$ into 
another $R$-algebra may be 
defined by specifying its value on the algebra generators 
$p_i$ for $i$ a nonnegative integer. 
A particularly useful example of such a homomorphism is 
that of \emph{plethystic substitution}. 
Let $(R,\lambda)$ be a $\lambda$-ring, and let 
$A\mapsto A^{(k)}$ denote the corresponding Adams 
homomorphisms. For instance, if $A$ is an element 
of a ring of rational functions, polynomials,
or Laurent polynomials in some sets of variables,
we will let $A^{(k)}$ denote the element obtained from 
$A$ by substituting $x=x^k$ for each variable $x$. 
Another example is the ring $\Lambda_{R}$. In this case, 
we extend the operations on $R$ by setting $p_{i}^{(k)}=p_{ik}$
for the power sum generators, which is compatible with specialization 
to finitely many variables. 
For instance, we would have 
\[A=\frac{1-q}{1-t} \Rightarrow 
A^{(k)}=\frac{1-q^k}{1-t^k}.\]

We use the usual language that
\[\lambda(TA) = \sum_{n\geq 0} \lambda^n(A) (-T)^n\in R[[T]]\]
for some formal variable $T$.
Now consider the \emph{plethystic exponential}
\begin{equation}
\label{pexp}
\Exp\left(TA\right) :=
\lambda\left(-TA\right)=
\exp\left(\sum_{k\geq 1} 
\frac{A^{(k)}T^k}{k}\right).
\end{equation}
For instance, if $R=\Lambda[X]$, we would have that
\[\Exp(TX)=\Exp(Tp_1)=\sum_{n\geq 0} T^nh_n\in \Lambda[X][[T]].\]
It has an inverse called the \emph{plethystic logarithm}
defined as follows: if
\[F=1+\sum_{n\geq 1} A_n T^n,\]
then we let
\[\Log(F)=\sum_{n\geq 1} V_nT^n,\]
where we define coefficients $U_n,V_n$ by
\[V_n=\frac{1}{n} \sum_{d|n} \mu(d) U_{n/d}^{(d)},\quad
\log(F)=:\sum_{n\geq 1} U_n \frac{T^n}{n}.\]
See~\cite{hausel-villegas} for details.

In the case of polynomials or Laurent polynomials,
we will let $\lambda(A)$ denote the evaluation at $T=1$
of the rational function with power series 
\[\lambda(TA)=\sum_{i\geq 0} (-T)^i \lambda^i(A)\in R[[T]],\]
provided that it exists. 
A simple example is 
\[\lambda\left(q-2qt^{-2}\right)=\frac{1-q}{(1-qt^{-2})^2}.\]
In the case of symmetric functions, if $f\in \Lambda$ has
no constant term, we may also refer to its exponential
$\Exp(f)$, leaving off the formal variable $T$. In this case,
we have not defined an honest element of $\Lambda$, but
it does make sense to consider a pairing such as
$(\Exp(f),g)$, because only finitely many terms in 
the sum \eqref{pexp} contribute. For that reason, it 
only really defines an element of the dual space.

For any element $A\in R$, we have a homomorphism 
\[\Lambda\rightarrow R,\quad 
p_k \mapsto A^{(k)}.\]
We denote the image of $f\in \Lambda$ by $f[A]$. 
The reason for this notation is that 
\[f[x_1+\cdots+x_n]=f(x_1,\ldots,x_n),\]
the evaluation of $f$ on some finite set of variables 
$x_i$. Under this map, we have $e_i[A]=\lambda^i(A)$. 
It is called a plethystic homomorphism because if 
and $f,g\in \Lambda$ are the characters of 
representations of the general linear group 
corresponding to some polynomial functors $F,G$
of vector spaces,
then $f[g]$ is the character of the composition $F\circ G$. 

A second homomorphism is the following operator,
which will play a central role in this paper: if $A\in R$, then 
\begin{equation}
\label{Gpdef}
\Gp^{m}(A) : \Lambda_R \rightarrow \Lambda_R,\quad
f \mapsto f[X+mA],
\end{equation}
where are using $X$ in place of $p_1=p_1[X]$. 
The variable $m$ may either be taken as a number,
or may be viewed as a symbol that is not affected by the plethystic 
operation, i.e. $m^{(k)}=m$. 
It is not hard to verify that
\begin{equation}
\label{Gpexp}
\Gp^m(A)=\exp\left(m\sum_{i>1} A^{(i)}\partial_i\right),
\end{equation}
where $\partial_k$ is the operator differentiation by $p_k$
on $\Lambda_R=R[p_1,p_2,\ldots]$. 
Combining equations \eqref{Gpexp} and \eqref{pexp},
we can see that its dual operator
under the standard inner product is given by a multiplication operator:
\begin{equation}
\label{Gmdual}
\left(\Gp^m(A)f,g)\right)=\left(f,\Gm^m(A)g\right),\quad
\Gm^m(A)f= \Exp\left(mAX\right)f.
\end{equation}
We will also write 
\[\Gamma_{\pm}(A)=\Gamma_{\pm}^1(A),\quad \Gamma_{\pm}=\Gamma_{\pm}(1)\]
so that $\Gamma_{\pm}^m(A)=\Gamma_{\pm}(A)^m$ for integers $m$. 
For this paper, an expression of the form $\Gm(A)\Gp(B)$ will be
called a \emph{vertex operator}.
It follows easily from the definitions that 
\begin{equation}
\label{vocr}
\Gp(A)\Gm(B)=
\Exp\left(AB\right)\Gm(B)\Gp(A) 
\end{equation}
provided both sides are convergent as power series. 
A useful example is 
\begin{equation}
\label{vocr0}
\Gp(x^{-1})\Gm(y)=
(1-yx^{-1})^{-1}\Gm(y)\Gp(x^{-1})\in \End(\Lambda)[[x^{\pm 1}]][[y]]. 
\end{equation}

A third plethystic homomorphism is 
\begin{equation}
\label{ups}
\Upsilon_{A}f=f[AX]. 
\end{equation}
For instance, we have
\[x^d=\Upsilon_{x} : p_k \mapsto x^k p_k,\]
where $d$ is the operator that multiplies a homogeneous 
polynomial $f$ of degree $k$ by $k$. 
A second common example is the usual automorphism 
\[\omega = (-1)^d \Upsilon_{-1} : p_k \mapsto (-1)^{k-1}p_k\]
It follows from definitions that we have the commutation relations 
\begin{equation}
\label{upscr}
\Gp(B)\Upsilon_A=\Upsilon_A\Gp(AB),\quad 
\Upsilon_A \Gm(B)=\Gm\left(AB\right)\Upsilon_A,\quad 
\Upsilon_A \Upsilon_B = \Upsilon_{AB}. 
\end{equation}

Let $\Lambda_{q,t}=\Lambda_{\Q(q,t)}$, with the Macdonald 
inner product 
\begin{equation}
\label{macsppleth}
(f,g)_{q,t}=\left(f,\Upsilon_Ag\right),\quad A=\frac{1-q}{1-t}
\end{equation}
or in the traditional notation,
\begin{equation}
\label{macsp}
(p_\mu,p_\nu)_{q,t}=\delta_{\mu,\nu}
z_{\mu} \prod_i \frac{1-q^i}{1-t^i}. 
\end{equation}
Let $P_\mu=P_\mu[X;q,t] \in \Lambda_{q,t}$ denote the usual Macdonald polynomials,
and let $J_\mu$ be the integral form defined by 
\begin{equation}
\label{Jmu}
J_\mu[X;q,t]=\prod_{s\in \mu} (1-q^{a(s)}t^{l(s)+1})P_\mu[X;q,t],
\end{equation}
whose expansion in the monomial basis has coefficients which 
are integer valued polynomials in $q,t$. 

The \emph{modified Macdonald inner product}~\cite{Ber},
is a variation of \eqref{macsp}
which is symmetric in $q,t$ defined by 
\begin{equation}
\label{hmsppleth}
(f,g)_*=(f,\Upsilon_{-M}
g)=(\Upsilon_{1-t}f,\Upsilon_{t-1}g)_{q,t},\quad M=(1-q)(1-t),
\end{equation}
or, in the power sum basis by 
\begin{equation}
\label{hmsp}
(p_\mu,p_\nu)_*=\delta_{\mu,\nu} (-1)^{l(\mu)}z_\mu 
\prod_i (1-q^{\mu_i})(1-t^{\mu_i}). 
\end{equation}
This differs from the usual notation~\cite{Ber} by a sign of $(-1)^d$,
but is more natural for the purposes of our paper. 
The \emph{modified Macdonald polynomials} are defined by 
\begin{equation}
\label{Hmu}
\tilde{H}_\mu=t^{-n(\mu)}\Upsilon_{(1-t^{-1})^{-1}} J_\mu[X;q,t^{-1}]
\end{equation}
and it follows easily that they are orthogonal with respect to 
the inner product \eqref{hmsp}. Their norms are given by 
\begin{equation}
\label{hnorm}
\left(\tilde{H}_\mu,\tilde{H}_\nu\right)_*=
\delta_{\mu,\nu}(-1)^{|\mu|}\prod_{s\in \mu} 
(q^{a_\mu(s)}-t^{l_\mu(s)+1}) 
(t^{l_\mu(s)}-q^{a_\mu(s)+1}). 
\end{equation}
Unlike the 
usual Macdonald polynomials, these polynomials 
have the following symmetry property in $q,t$:
\[\Ht_{\mu} = \Ht_{\mu'} \big|_{q=t,t=q}\]
As explained in~\cite{Ber},
they are eigenfunctions of the modified 
Macdonald operator, i.e. 
\begin{equation}
\label{mmo}
D_0 \Ht_{\mu}=\psi_{\mu}(q,t)\Ht_\mu,
\end{equation}
where 
\[\psi_\mu(q,t)=1+MB_\mu(q,t),\quad 
B_\mu(q,t)=\sum_{(i,j)\in \mu}q^{j-1}t^{i-1},\] and 
\[\sum_{x \in \Z} x^k D_k := \Gm(-x)\Gp(Mx^{-1}) \in 
\End(\Lambda_{q,t})[[x^{\pm 1}]].\]

The modified Macdonald polynomials arose in Garsia, Haiman, 
and Procesi's work on the $n!$ conjecture~\cite{GarPro1,HaiPol},
in which they arise as the Frobenius character of the Garsia-Haiman
module. In the course of the proof, 
they were given the following geometric interpretation:
let $\Hilb_n \C^2$ denote the Hilbert scheme of points on $\C^2$,
which is a smooth complex algebraic variety parametrizing
ideals $I\in \C[x,y]$ such that $\dim_{\C}(\C[x,y]/I)=n$.
There is an action of a two-dimensional torus $T$ on this space,
by pulling back ideals from the action
\[T \curvearrowright \C^2,\quad 
(q,t)\cdot (x,y)=(q^{-1}x,t^{-1}y).\]
The fixed points correspond to Young
diagrams of norm $n$, paramerizing ideals generated by monomials:
\[\left(\Hilb_n \C^2\right)^T=\left\{I_\mu:|\mu|=n\right\},\quad
I_\mu=(x^{\mu_1},x^{\mu_2-1}y,\ldots,y^{\mu_l}) \subset S=\C[x,y].\]
There is a bundle $\mathcal{P}$ on $\Hilb_n \C^2$ 
of rank $n!$ called the
\emph{Procesi bundle}, that is equivariant under the action
of $T$, equipped with a fiberwise action of the symmetric
group $S_n$ in such a way that every fiber is isomorphic to
the regular representation.
The fiber $\mathcal{P}_\mu$ over any fixed point 
$I_\mu \in \Hilb^T_n \C^2$ is therefore a representation 
of $S_n \times T$, that turns out to be 
isomorphic to the Garsia-Haiman module $R_\mu$,
and the $n!$ conjecture amounts to the statement that
$\dim_\C R_\mu=n!$.

The modified
Macdonald polynomial $\Ht_\mu$ arises as the Frobenius
characteristic of this module, meaning that if
\[\ch R_\mu = \sum_{\lambda} a_{\lambda,\mu}(q,t) \chi_\lambda\]
describes the decomposition into irreducibles over $S_n$, then
\[\Ht_\mu=\sum_{\lambda} a_{\lambda,\mu}(q,t) s_\lambda\]
is the description of $\Ht_\mu$ in the Schur basis. For instance,
for $n=2$ we would have
\[\Ht_{[2]}=s_{[2]}+qs_{[1,1]},\quad
\Ht_{[1,1]}=s_{[2]}+ts_{[1,1]},\]
showing that both modules have one copy of the trivial representaiton
and one copy of the alternating representation, but with different
torus actions on each component.

\section{The Vertex operator}

Several useful quantities have translates on either side. 
First, consider the equivariant Euler characteristic of a pair of 
monomial ideals 
\[\chi_{\mu,\nu}= \sum_{i\geq 0} (-1)^i \ch \Ext_S^i(I_\mu,I_\nu) 
\in \Z((q,t)),\]
where the $\ch$ refers to the torus character. The Euler 
characteristic is a well-behaved quantity that may be computed in 
any resolution of either ideal, which leads to the following nice 
expression:
\begin{equation}
\label{chiform}
\chi_{\mu,\nu}=M I_\nu\overline{I}_\mu,\quad \overline{f(q,t)}=f(q^{-1},t^{-1}),
\end{equation}
where we are using $I_\mu$ to denote both the ideal and 
its torus character $I_\mu=M^{-1}-B_\mu$,
noticing that $M^{-1}$ is the character of $S$. 

In~\cite{CaOk}, the authors introduced a family of 
classes in the (equivariant)
$K$-theory of $\Hilb_m S \times \Hilb_n S$ for any smooth 
quasiprojective surface $S$ together with a line bundle on it, 
and proved that its 
Euler characteristic defines
a \emph{vertex operator} in Nakajima's Heisenberg operators.
In the case when $S=\C^2$ with the 
above torus action and trivial bundle, it is the class of
an honest an equivariant bundle $E$ of rank $m+n$
on $\Hilb_m \C^2 \times \Hilb_n \C^2$, whose fibers map 
be described in terms of $\Ext$-groups after extending ideals
to sheaves on $\mathbb{CP}^2$. The torus characters of the 
fibers of this bundle over a fixed point 
$(I_\mu,I_\nu) \in \Hilb_m \C^2 \times \Hilb_n \C^2$
are given by
\[E_{\mu,\nu}=E_{\mu,\nu}(q,t)=\overline{\chi}_{\phi,\phi}-
\overline{\chi}_{\mu,\nu}
\in \Z[q^{\pm 1},t^{\pm 1}].\]
It is well known that $E_{\mu,\mu}$ is the character of the cotangent 
bundle to the Hilbert scheme at the torus fixed point $I_\mu$. 
The authors proved the combinatorial formula, generalizing the usual
formula for $E_{\mu,\mu}$:
\begin{equation}
\label{Ecomb}
E_{\mu,\nu}=\sum_{s \in \mu} q^{-a_\nu(s)}t^{l_\mu(s)+1}+
\sum_{s\in\nu} q^{a_\mu(s)+1}t^{-l_{\nu}(s)}. 
\end{equation}

Now for any pair of partitions, let us define 
\[N_{\la,\mu}=N_{\la,\mu}(u;q,t)=(-u)^{-|\mu|}q^{n(\mu')} t^{n(\mu)}
\lambda\left(uE_{\la,\mu}\right).\]
If only one partition is specified, we will assume that $\nu=\mu$. 
If no variable is specified, we will assume that $u=1$. 
The modified Macdonald inner product 
can now be written as 
\[(\tilde{H}_\la,\tilde{H}_\mu)_*=
\delta_{\la,\mu}N_\la.\]
On the other hand, we also have that
\[\mathcal{H}_{1,\lambda}(u;z,w)=
\frac{N_{\lambda,\la}(uz^{-1}w^{-1};z^2,w^2)}
{N_{\la,\la}(z^{-1}w^{-1};z^2,w^2)}\]

Now define the following vertex operator introduced in~\cite{CaNekOk1}:
\begin{equation}
\label{VOdef}
\Gamma(u):=\Gm\left(\frac{u^{-1}-1}{(1-q)(1-t)}\right) 
\Gp\left(1-uqt\right) 
\end{equation}
In our notation, their main result states that 
\begin{theorem}(~\cite{CaNekOk1}) 
\label{cnothm}
We have that 
\[\left(\Gamma(u) \Ht_\lambda,\Ht_{\mu}\right)_*= 
\left(\Gp\left(1-uqt\right) \Ht_{\lambda},
\Gp\left(1-u^{-1}\right)\Ht_{\mu}\right)_*=
N_{\lambda,\mu}(u).\]
\end{theorem}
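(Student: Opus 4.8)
The statement contains two equalities. The first is a formal identity between vertex operators, which I would settle by a short adjointness computation; the second---the evaluation as the explicit product $N_{\lambda,\mu}(u)$---carries the real content, and the plan is to reduce it to a single reproducing-kernel identity and then match coefficients against the combinatorial formula \eqref{Ecomb}.

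\emph{First equality.} Write $\Gamma(u)=\Gm(A)\Gp(1-uqt)$ with $A=(u^{-1}-1)/M$; it then suffices to compute the $(\cdot,\cdot)_*$-adjoint of $\Gm(A)$. Using the definition $(f,g)_*=(f,\Upsilon_{-M}g)$ from \eqref{hmsppleth}, the standard adjointness $(\Gm(A)f,g)=(f,\Gp(A)g)$ of \eqref{Gmdual}, and the relation $\Gp(A)\Upsilon_{-M}=\Upsilon_{-M}\Gp(-MA)$ of \eqref{upscr}, one obtains
\[
(\Gm(A)f,g)_*=(\Gm(A)f,\Upsilon_{-M}g)=(f,\Gp(A)\Upsilon_{-M}g)=(f,\Upsilon_{-M}\Gp(-MA)g)=(f,\Gp(-MA)g)_*.
\]
Since $-MA=1-u^{-1}$, taking $f=\Gp(1-uqt)\Ht_\lambda$ and $g=\Ht_\mu$ yields the first equality at once.

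\emph{Second equality.} Set $N_\mu=(\Ht_\mu,\Ht_\mu)_*$ and form the reproducing kernel $K(X,Y):=\sum_\mu \Ht_\mu[X]\,\Ht_\mu[Y]/N_\mu$, which by \eqref{hmsp} equals $\Exp(-XY/M)$ and satisfies $(K(X,Y),f[X])_*=f[Y]$. Applying $\Gamma(u)$ in the $X$-variables, using that $\Gp(1-uqt)$ acts by the plethystic shift $X\mapsto X+(1-uqt)$ (equation \eqref{Gpdef}) while $\Gm(A)$ acts by multiplication by $\Exp(AX)$ (equation \eqref{Gmdual}), a short computation gives
\[
\Gamma(u)_X\,K(X,Y)=\Exp(AX)\,\Exp\!\big(-(X+1-uqt)Y/M\big)=K\big(X,\,Y+1-u^{-1}\big)\,\Exp\!\big(-(1-uqt)Y/M\big).
\]
Pairing both sides with $\Ht_\mu[X]$ under $(\cdot,\cdot)_*$ and invoking the reproducing property collapses the $X$-dependence, leaving
\[
\sum_\lambda \frac{(\Gamma(u)\Ht_\lambda,\Ht_\mu)_*}{N_\lambda}\,\Ht_\lambda[Y]=\Ht_\mu\big[Y+1-u^{-1}\big]\,\Exp\!\big(-(1-uqt)Y/M\big).
\]
Thus the desired matrix elements are exactly the coefficients of the right-hand side in the basis $\{\Ht_\lambda/N_\lambda\}$. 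As a consistency check, the right-hand side is itself $\Gamma(1/(uqt))\Ht_\mu[Y]$, so this already exhibits the symmetry $(\Gamma(u)\Ht_\lambda,\Ht_\mu)_*=(\Gamma(1/(uqt))\Ht_\mu,\Ht_\lambda)_*$.

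It remains to expand the right-hand side in the $\Ht_\lambda$-basis and identify the coefficients with $N_{\lambda,\mu}(u)=(-u)^{-|\mu|}q^{n(\mu')}t^{n(\mu)}\lambda(uE_{\lambda,\mu})$. The combinatorial formula \eqref{Ecomb} makes the target a completely explicit product over the boxes of $\lambda$ and $\mu$, and the conceptual reason for this factorization is geometric: $N_{\lambda,\mu}(u)$ is the value at the fixed point $(I_\lambda,I_\mu)$ of the $K$-theory kernel class $\lambda(uE)$ on $\Hilb_{|\lambda|}\C^2\times\Hilb_{|\mu|}\C^2$, up to the explicit monomial, consistent with the character formula \eqref{chiform} and the cotangent weights \eqref{hnorm}. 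I would therefore prove the matching by identifying $\Gamma(u)$ with this correspondence operator---its $\Gm$ and $\Gp$ factors being the creation and annihilation parts coming from the $\Ext$-group weights, with the scalar arguments $1-uqt$ and $(u^{-1}-1)/M$ supplied by the boundary terms $\overline{\chi}_{\phi,\phi}$---as in \cite{CaOk,CaNekOk1}. The main obstacle is precisely this identification: showing that the bosonic matrix element of $\Gamma(u)$ factors into the box-product $\lambda(uE_{\lambda,\mu})$ with the correct prefactor $(-u)^{-|\mu|}q^{n(\mu')}t^{n(\mu)}$. Purely algebraically this amounts to evaluating the single-variable Macdonald Pieri (branching) coefficients for the shifts $Y\mapsto Y+1-u^{-1}$ and $Y\mapsto Y-(1-uqt)$ in closed form and checking that the resulting double sum telescopes to the product; controlling that telescoping is the crux.
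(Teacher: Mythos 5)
Your proof of the first equality is correct and is exactly the argument the paper intends: the paper disposes of it in one sentence as following from \eqref{upscr}, \eqref{Gmdual} and \eqref{hmsppleth}, and your chain
\[
(\Gm(A)f,g)_* = (f,\Gp(A)\Upsilon_{-M}g) = (f,\Upsilon_{-M}\Gp(-MA)g) = (f,\Gp(1-u^{-1})g)_*,
\qquad A=\tfrac{u^{-1}-1}{M},
\]
is precisely that computation written out. Your kernel reduction is also sound as far as it goes: by \eqref{hmsp} the kernel $K(X,Y)=\Exp(-XY/M)$ does reproduce with respect to $(\cdot,\cdot)_*$, the formula $\Gamma(u)K(X,Y)=K(X,Y+1-u^{-1})\,\Exp\bigl((uqt-1)Y/M\bigr)$ (acting in the $X$ variables) is a correct application of \eqref{Gpdef} and \eqref{Gmdual}, and the consequence
\[
\sum_\lambda \frac{(\Gamma(u)\Ht_\lambda,\Ht_\mu)_*}{N_\lambda}\,\Ht_\lambda[Y]
=\Gamma\bigl(\tfrac{1}{uqt}\bigr)\Ht_\mu[Y],
\]
with the symmetry $(\Gamma(u)\Ht_\lambda,\Ht_\mu)_*=(\Gamma(1/(uqt))\Ht_\mu,\Ht_\lambda)_*$, is a genuine and rather nice observation.

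However, the proposal does not prove the second equality, and you have in fact located the missing step yourself. The claim that the coefficients of $\Ht_\mu[Y+1-u^{-1}]\,\Exp\bigl((uqt-1)Y/M\bigr)$ in the basis $\bigl\{\Ht_\lambda/N_\lambda\bigr\}$ equal the explicit products $N_{\lambda,\mu}(u)$ is not a residual ``telescoping'' detail to be controlled at the end: it is the entire content of the theorem, and your reformulation (expanding $\Gamma(1/(uqt))\Ht_\mu$ instead of $\Gamma(u)\Ht_\lambda$) is a problem of exactly the same kind and difficulty as the original statement. Neither the Macdonald--Pieri computation nor the geometric identification of $\Gamma(u)$ with the $\Ext$-correspondence is carried out, and invoking \cite{CaOk,CaNekOk1} for that identification is circular, since what is being proved \emph{is} the theorem of \cite{CaNekOk1}. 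For comparison, the paper is in the same position by design: it quotes the second equality from \cite{CaNekOk1}, notes that the proof there uses the Fourier transform on symmetric functions and formulas of Cherednik (a different route from the Pieri or geometric arguments you sketch), states that it ``will not be reproduced here,'' and in-text establishes only the first equality plus a sanity check at $\lambda,\mu=[2],[1,1]$. So your write-up covers everything the paper itself actually proves---and a bit more, since the kernel identity is a useful repackaging---but it should be presented as a correct reduction plus a citation, not as a proof of the evaluation $N_{\lambda,\mu}(u)$.
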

The first equality follows from the properties in 
\eqref{upscr} and \eqref{Gmdual}, as well as the definition 
of the inner product \eqref{hmsppleth}. 
The proof given uses tools such as 
the Fourier transform in symmetric functions as well as 
some formulas due to Cherednik, and will not be reproduced here. 
In fact, this operator is most naturally defined as acting
on the equivariant $K$ theory of $\Hilb_n \C^2$.
The point is that this operator extends the cohomological 
vertex operator of~\cite{CaOk} in the case where the smooth surface 
is $\C^2$, in sense that we can recover it by setting 
$q=e^{a\epsilon_1},t^{a\epsilon_2},u^{am}$ and taking the limit 
\[\lim_{a\rightarrow 0} \Upsilon_{1-q} \Gamma(u) \Upsilon^{-1}_{1-q}.\]
Under this limit, the modified Macdonald symmetric functions tend to the 
Jack symmetric functions with parameter $-\epsilon_1/\epsilon_2$. 

For instance, let us calculate the case $\lambda,\mu=[2],[1,1]$. 
We have 
\[\Ht_{[2]}= s_{[2]}+qs_{[1,1]}=\frac{1+q}{2}p_1^2+\frac{1-q}{2}p_2,\]
\[\Ht_{[1,1]}=s_{[2]}+ts_{[1,1]}=\frac{1+t}{2}p_1^2+\frac{1-t}{2}p_2.\]
We can calculate 
\[\Gp\left(1-uqt\right)\Ht_{[2]}=\]
\[\frac{1+q}{2}p_1^2+\frac{1-q}{2}p_2+(1-uqt)(1+q)p_1+(1-uqt)(1-uq^2t),\]
\[\Gp\left(1-u^{-1}\right)\Ht_{[1,1]}=\]
\[\frac{1+t}{2}p_1^2+\frac{1-t}{2}p_2-(1-u^{-1})(1+t)p_1+(1-u^{-1})(1-u^{-1}t).\]
Taking the inner product gives 
\[u^{-2}t(1-u)(1-ut)(1-uq^2t^{-1})(1-uqt)=N_{[2],[1,1]}(u).\]

\section{Macdonald identities}
\label{subsubsection-mac-id}

Before proving the main theorems, it is useful to consider the
identities in some special cases.

An interesting specialization of \eqref{conju} is the
{\it Euler specialization}: $z=\sqrt q,w=1/\sqrt q$. We get 
\begin{equation}
\label{euler-specializ}
\Log\left(\sum_\lambda  \prod_{s\in \lambda} 
\frac{(1-uq^{h(s)})(1-u^{-1}q^{h(s)})}{(1-q^{h(s)})^2}\,T^{|\lambda|}\right)=
\frac{(q-u)(q-u^{-1})}{(q-1)^2}\frac{T}{1-T}. 
\end{equation}
Setting $u=q^r$ for $r$ a positive integer this becomes 
\begin{equation}
\label{mac-id}
\sum_\lambda  \prod_{s\in \lambda} 
\frac{(1-q^{h(s)+r})(1-q^{h(s)-r})}{(1-q^{h(s)})^2}
\,T^{|\lambda|}=  \prod_{j=1}^{r-1}
\prod_{n\geq 1} (1-q^{r-j}T^n)^j 
(1-q^{-r+j}T^n)^j(1-T^n),
\end{equation}
as one easily checks that 
$$
\frac{(q-q^r)(q-q^{-r})}{(q-1)^2}=
-\sum_{j=1}^{r-1}\left(jq^{r-j}+jq^{-r+j}+1\right). 
$$
The identities~\eqref{mac-id} are a $q$-analogue of
certain identities from~~\cite{NekOk1}.  We 
give an independent proof of these identities below. 

Note that the series in \eqref{mac-id} visibly gets restricted to 
partitions that are {\it $r$-cores}; i.e., partitions with no hook of 
length $r$. 
We can give a direct proof of \eqref{mac-id} for $r=2$. Indeed, the 
$2$-cores are known to be the partitions of the form 
$\lambda=(m,m-1,\ldots,2,1)$ for some positive integer $m$. A 
calculation shows that in this case 
\begin{equation}
\label{2-core}
\prod_{s\in \lambda} 
\frac{(1-q^{h(s)+2})(1-q^{h(s)-2})}{(1-q^{h(s)})^2}
=(-1)^m\frac{q^{m+1}-q^{-m}}{q-1}
\end{equation}
and hence \eqref{mac-id} becomes Jacobi's triple product identity in 
the form 
$$
\sum_{m\geq 0} (-1)^m\sum_{k=-m}^mq^k\,T^{m(m+1)/2}= \prod_{n\geq 1}
(1-qT^n) 
(1-q^{-1}T^n)(1-T^n). 
$$

For general $r$ \eqref{mac-id} follows from the Macdonald identity for 
the affine Lie algebra $A_{r-1}^{(1)}$. Indeed, if we specialize the 
right hand side of~\cite[(0.4)]{MacAff} so that $e^\alpha=q$ for 
all simple roots $\alpha$ of $A_{r-1}$ (hence in general $e^\alpha$ is 
mapped to $q^{\height(\alpha)}$, where $\height(\alpha)$ is the height 
of $\alpha$) we obtain the right hand side of \eqref{mac-id} as there 
are $j$ roots of height $r-j$ for $j=1,2,\ldots, r-1$. To complete the 
proof of~\eqref{mac-id} we would need an analogue of 
\eqref{2-core}. This is given in Proposition~\ref{prop-t-core}, which is a 
consequence of the main result of~\cite{dehaye-han}.

The key identity linking the main series~\eqref{euler-specializ} and 
that of the Macdonald identities is the following. 
\begin{proposition}
\label{prop-t-core}
For an $r$-core $\lambda$ and associated vector $v$. 
\begin{equation}
\label{t-core}
\prod_{s\in \lambda} 
\frac{(1-q^{h(s)+r})(1-q^{h(s)-r})}{(1-q^{h(s)})^2} =
\frac{V(q^{v_{-k}},\ldots,q^{v_k})}{V(q^{-k},\ldots,q^k)},
\end{equation}
where $V$ is the Vandermonde determinant. 
\end{proposition}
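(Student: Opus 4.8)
The plan is to pass to the abacus (Maya diagram) picture of $\lambda$ and reduce the hook product to a manipulation of beta-numbers. Concretely, I would fix an integer $N$ divisible by $r$ and large enough that $\lambda$ has fewer than $N$ parts, and set $\beta_i=\lambda_i+N-i$ for $1\le i\le N$, so that $\beta_1>\cdots>\beta_N\ge 0$. The cells of $\lambda$ (equivalently, its hooks) are in bijection with the pairs $(\beta_i,c)$ in which $c$ is a nonnegative integer not among the $\beta_j$ and $c<\beta_i$, the corresponding hook length being $\beta_i-c$. Separating the pairs with $c$ a hole from those with $c=\beta_j$ gives, for an arbitrary function $f$, the standard identity
\[
\prod_{s\in\lambda}f(h(s))=\frac{\prod_{i=1}^N\prod_{m=1}^{\beta_i}f(m)}{\prod_{1\le i<j\le N}f(\beta_i-\beta_j)},
\]
which I would apply with $f(x)=g(x):=(1-q^{x+r})(1-q^{x-r})/(1-q^x)^2$, turning the left-hand side of \eqref{t-core} into the right-hand side of this identity. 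Writing $\{n\}:=1-q^n$, note that $g$ is invariant under $q\mapsto q^{-1}$, which matches the known symmetry of the target ratio.

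Next I would bring in the $r$-core structure. Since $\lambda$ is an $r$-core, its beta-numbers occupy an initial segment of each residue class modulo $r$ (all beads pushed to the top of each abacus runner). The single-bead factor telescopes: splitting $g(x)=\tfrac{\{x+r\}}{\{x\}}\cdot\tfrac{\{x-r\}}{\{x\}}$, the $+r$ part of $\prod_{m=1}^{\beta_i}g(m)$ collapses to $\prod_{l=1}^{r}(1-q^{\beta_i+l})/(1-q^{l})$, while the $-r$ part produces the factor $\{0\}=1-q^0=0$ precisely when $\beta_i\ge r$. The essential point is that these spurious zeros are matched exactly by zeros of the denominator: $g(\beta_i-\beta_j)$ vanishes exactly when $\beta_i-\beta_j=r$, i.e.\ for pairs of consecutive beads on a common runner, and the $r$-core condition (no hook length divisible by $r$) is what makes the pairing $\{\beta_i\ge r\}\leftrightarrow\{(\beta_i,\beta_i-r)\}$ a bijection. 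Cancelling these $0/0$'s first is what makes the whole expression a genuine rational function.

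After this cancellation I would reorganize everything by runner. The remaining same-runner differences $\beta_i-\beta_j$ are multiples of $r$, and the corresponding factors $g(rm)=\{r(m+1)\}\{r(m-1)\}/\{rm\}^2$ telescope along each runner against the surviving single-bead factors, so that only cross-runner data persist. Using $1-q^{\beta_i-\beta_j}=q^{-\beta_j}(q^{\beta_j}-q^{\beta_i})$, the surviving products of $\{\beta_i-\beta_j\}$-type factors assemble into a Vandermonde determinant in the variables $q^{(\text{topmost-bead position on each runner})}$; by the $r$-core$\leftrightarrow$vector bijection of \S2.1 (following \cite{gks}) these positions are the coordinates $v_i$, up to the global normalization $\sum_i v_i=0$ and the reindexing of runners by residues, and the empty partition (positions $\rho=(-k,\dots,k)$) supplies the denominator $V(q^{-k},\dots,q^{k})$. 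Finally I would check that all accumulated monomial prefactors $q^{(\cdots)}$ cancel using $\sum_i v_i=0$ (equivalently, via the stated formula for $|\lambda|$) and that the $N$-dependence drops because $v$ is the normalized, $N$-independent vector.

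The hard part will be the middle two steps: the exact cancellation of the spurious $0/0$ factors and the collapse of the same-runner contributions, ending in the recognition of the residual cross-runner product as the Vandermonde ratio; keeping the powers of $q$ and the signs under control there is the delicate bookkeeping. This is in effect a self-contained derivation of the special case of \cite{dehaye-han} that we need, and an alternative is simply to quote the main theorem of \cite{dehaye-han} and translate it into the present notation via the bijection above.
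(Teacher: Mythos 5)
Your proposal is correct, and its closing fallback---quote the main theorem of \cite{dehaye-han} and translate it through the abacus bijection---is in fact exactly the paper's entire proof: the paper disposes of Proposition~\ref{prop-t-core} in one line, citing Theorem~1 of \cite{dehaye-han} with $\tau(n):=1-q^n$. Your primary route is therefore genuinely different: it is a self-contained re-derivation of the special case of \cite{dehaye-han} that is needed, and its ingredients are all sound --- the classical multiset identity expressing hook lengths through the beta-numbers $\beta_i$, the initial-segment (pushed-up beads) structure of an $r$-core's runners, the matching of the zero of $\prod_{m\leq \beta_i}g(m)$ at $m=r$ with the zero of $g(\beta_i-\beta_j)$ for consecutive beads on a runner, and the final assembly of the cross-runner factors into a Vandermonde in the topmost-bead positions, which the bijection of \cite{gks} identifies with the vector $v$. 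One point should be made honest: since numerator and denominator of the beta-number identity both vanish, ``cancelling the $0/0$'s first'' needs a genuine deformation --- e.g.\ replace $q^{\pm r}$ by $z^{\pm 1}$, so $g_z(x)=(1-q^xz)(1-q^xz^{-1})/(1-q^x)^2$, prove the identity for generic $z$, observe that the factor $(1-q^rz^{-1})$ occurs with equal multiplicity on both sides (this is exactly your zero-counting bijection, i.e.\ the $r$-core condition), cancel it as a rational function, and only then set $z=q^r$. As for what each approach buys: the paper's citation is instant but opaque, while your derivation is elementary and self-contained at the cost of the bookkeeping you flag. Note finally that the paper itself later gives an independent proof of \eqref{t-core} in \S\ref{subsubsection-mac-id}, expanding $\Gm(x_1+\cdots+x_r)\Gp(-x_1^{-1}-\cdots-x_r^{-1})$ into Bernstein operators $\psi(x_i)$ acting on $s_\lambda$ and specializing $x_i=q^{i-1}$; that argument exploits the same Maya-diagram structure of $r$-cores as yours, but lets the vertex-operator calculus absorb the telescoping and sign bookkeeping that your approach must perform by hand.
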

\begin{proof} It follows from~\cite{dehaye-han}[Theorem 1] by taking 
  $\tau(n):=1-q^n$. 
\end{proof}
Once we have \eqref{t-core} then \eqref{mac-id} becomes the Macdonald 
identity~\cite{MacAff}[(0.4)] for the affine Lie algebra 
$A_{r-1}^{(1)}$ specialized to $e^\alpha=q$ for all simple roots.  To 
see this note that the lattice $M$ of~\cite[(0.4)]{MacAff} consists 
of the vectors $\mu=rn$ with $n=(n_0,\ldots,n_{r-1})\in \Z^r$
satisfying $n_0+n_1+\cdots +n_{r-1}=0$. The half-sum of the positive 
roots $\rho$ is $(-k,\ldots,k)$ and 
$$
\frac 1{2r}\left(\|rn+\rho\|^2-\|\rho\|^2\right)=
\frac 1{2r}(v_{-k}^2+\cdots+v_k^2)-\frac{r^2-1}{24},
$$
where $(v_k,\ldots,v_k):=rn+\rho$.  We claim that the quantity 
$\chi(\mu)$ of Macdonald's matches the right hand side of 
\eqref{t-core}. The specialization we are considering sends $e^v$ to 
$q^{\langle v,\rho\rangle}$ and hence sends the numerator of 
$\chi(\mu)$ to 
$$
\sum_{\sigma\in S_n} \sgn(\sigma) \,q^{\langle \sigma v,\rho\rangle}=
q^{-r(r-1)/2}V(q^{v_{-k}},\ldots,q^{v_k}). 
$$
Similarly, the denominator of $\chi(\mu)$ equals 
$q^{-r(r-1)/2}V(q^{-k},\ldots,q^k)$ proving our claim.

It is interesting to see what happens to the vertex operator
in this special case, i.e. when $t=q^{-1},u=q^{-r}$. 
In this case we get 
\[\Upsilon_{1-q}\Ht_\mu\big|_{t=q^{-1}}=
a_\mu(q)s_{\mu},\quad 
a_\mu(q)=q^{n(\mu)}\prod_{s \in \mu} (1-q^{h(s)}),\]
\[\Gamma'(q):=
\Upsilon_{q^{-1}-1}\Gamma(q^{-r}) \big|_{t=q^{-1}}\Upsilon_{q^{-1}-1}^{-1}=
\Gm\left(1+q+\cdots+q^{r-1}\right) 
\Gp\left(-1-q^{-1}-\cdots-q^{1-r}\right).\]
By Theorem \ref{cnothm}, we must have 
\[\left(\Gamma'(q) s_{\lambda},s_{\mu}\right)= q^{-r|\lambda|}
\frac{\prod_{s \in \lambda}(1-q^{r+a_\mu(s)+l_\lambda(s)+1}) 
\prod_{s \in \mu}(1-q^{r-a_\lambda(s)-l_\mu(s)-1})}
{a_{\lambda}(q^{-1})a_\mu(q)},\]
which can be checked to agree with the left hand side of 
\eqref{t-core} at $\lambda=\mu$. 
It should therefore be possible to show that the left hand 
side agrees with the right hand side of \eqref{t-core} in 
this case. 

We prove something more general:
\begin{proposition}
If $\la$ is an $r$-core with associated vector $\nu$, then 
\[\left(\Gm\left(x_1+\cdots+x_r\right) 
\Gp\left(-x_1^{-1}-\cdots-x_r^{-1}\right)s_\lambda,s_\lambda\right)=
\frac{\det(A^\nu)}{\det(A^\rho)}\]
where $A^{\nu}_{i,j}=x_i^{\nu_j}$. 
\end{proposition}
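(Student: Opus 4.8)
The plan is to compute the matrix coefficient by making the action of the two half-vertex operators on the Schur basis completely explicit, then to recognize the resulting finite sum as a free-fermionic (Slater) determinant, which for an $r$-core collapses to the stated ratio of generalized Vandermondes.

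First I would record how each factor acts. By \eqref{Gmdual} the operator $\Gm(x_1+\cdots+x_r)$ is multiplication by the Cauchy kernel $\Exp((x_1+\cdots+x_r)X)=\sum_\kappa s_\kappa(x)\,s_\kappa$, while $\Gp(-x_1^{-1}-\cdots-x_r^{-1})$ is the plethystic substitution $f\mapsto f[X-x_1^{-1}-\cdots-x_r^{-1}]$. Expanding the latter by the skew rule and using $s_{\lambda/\mu}[-A]=(-1)^{|\lambda/\mu|}s_{(\lambda/\mu)'}[A]$ gives $\Gp(-x_1^{-1}-\cdots-x_r^{-1})s_\lambda=\sum_\mu(-1)^{|\lambda/\mu|}s_{(\lambda/\mu)'}(x^{-1})\,s_\mu$; multiplying by the Cauchy kernel and extracting the coefficient of $s_\lambda$ then yields the closed form
\[
\bigl(\Gm(x_1+\cdots+x_r)\,\Gp(-x_1^{-1}-\cdots-x_r^{-1})\,s_\lambda,\,s_\lambda\bigr)=\sum_{\mu\subseteq\lambda}(-1)^{|\lambda/\mu|}\,s_{(\lambda/\mu)'}(x^{-1})\,s_{\lambda/\mu}(x).
\]
In particular $\lambda=\varnothing$ gives $1$, which fixes the overall normalization. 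Equivalently, one may first use \eqref{vocr} in the form $\Gp(-x_a^{-1})\Gm(x_b)=(1-x_bx_a^{-1})\Gm(x_b)\Gp(-x_a^{-1})$ to rewrite the operator as $\prod_{a<b}(1-x_bx_a^{-1})^{-1}\,\mathsf B(x_1)\cdots\mathsf B(x_r)$, where $\mathsf B(x)=\Gm(x)\Gp(-x^{-1})$ is the Bernstein vertex operator; this is the entry point I expect to be most useful for the determinant.

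Second, I would pass to the semi-infinite wedge, in which $s_\lambda$ is the wedge attached to the beta-set $S(\lambda)=\{\lambda_i-i\}$ and $\mathsf B(x)=\sum_m\psi_m x^m$ is a fermion field. The product $\mathsf B(x_1)\cdots\mathsf B(x_r)$ is then a product of $r$ fields, so the determinantal structure of fermionic matrix coefficients (Wick's theorem) expresses the diagonal coefficient as a determinant of one-body quantities. Concretely, the two half-vertex operators carry the Toeplitz symbols $\prod_i(1-x_iS)^{-1}$ and $\prod_i(1-x_i^{-1}S^{-1})$, whose product telescopes to the scaled shift $(-1)^r(x_1\cdots x_r)^{-1}S^{-r}$: this is the one-body reason the answer is governed by a single step along each runner of the $r$-runner abacus, although the honest matrix coefficient requires keeping track of the Fermi sea (equivalently, the nontrivial central extension of the vertex operators recorded by the $\Exp(AB)$ factor in \eqref{vocr}), which I address next.

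Finally I would use the hypothesis that $\lambda$ is an $r$-core. On the abacus the beta-set of an $r$-core is a disjoint union of $r$ runners, each a shifted vacuum whose Fermi levels are exactly the coordinates of the associated vector $\nu=rn+\rho$. Matching the Wick determinant to these occupied levels produces $\det(A^\nu)$, while the empty core (levels $\rho$) supplies the normalizing $\det(A^\rho)$; the fermionic Vandermonde coming from the anticommutators of $\psi(x_1),\dots,\psi(x_r)$ is what absorbs the prefactor $\prod_{a<b}(1-x_bx_a^{-1})^{-1}$, and the correspondence between the $r$ variables and the $r$ runners fixes which exponents enter. The main obstacle is precisely this identification: since the unregularized determinant runs over the infinite Fermi sea, one must pass to the ratio against the vacuum and show that it is the $r$-core (empty $r$-quotient) condition that makes this infinite ratio collapse to the finite bialternant $\det(A^\nu)/\det(A^\rho)$. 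I would pin down the signs and the variable-to-runner dictionary first in the case $r=3$, $\lambda=(1)$, where $\nu=(2,0,-2)$ and both sides equal $1-(x_1+x_2+x_3)(x_1^{-1}+x_2^{-1}+x_3^{-1})$, before treating a general core.
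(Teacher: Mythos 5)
Your proposal follows essentially the same route as the paper's proof: both use the commutation relation \eqref{vocr0} to rewrite the operator as $\prod_{i<j}(1-x_jx_i^{-1})^{-1}\psi(x_1)\cdots\psi(x_r)$ with $\psi(x)=\Gm(x)\Gp^{-1}(x^{-1})$ the Bernstein vertex operator, and both then exploit the Maya-diagram (abacus) action of the components $\psi_i$ on Schur functions together with the $r$-core hypothesis to produce $\det(A^\nu)/\det(A^\rho)$. The only real difference is packaging: where you invoke Wick's theorem and worry about regularizing an infinite Fermi-sea determinant against the vacuum, the paper simply expands $(\psi(x_1)\cdots\psi(x_r)s_\lambda,s_\lambda)$ as a sum of monomials $x_1^{i_1}\cdots x_r^{i_r}$ and observes that the $r$-core condition forces any contributing index vector to have distinct residues modulo $r$ with a unique underlying set, so the $r!$ orderings sum directly (with fermionic signs) to $\det(A^\nu)$ --- a finite, direct argument that closes the ``main obstacle'' you flag without any infinite determinants.
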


\begin{proof}
Using \eqref{vocr0}, we may write 
\begin{equation}
\label{berneq}
\Gm(x_1+\cdots+x_r)\Gp(-x_1^{-1}-\cdots-x_r^{-1})=
\left(\prod_{i<j} (1-x_jx_i^{-1})^{-1}\right) 
\psi(x_1)\cdots \psi(x_r) 
\end{equation}
where 
\[\psi(x)=\sum_{i\in \Z} x^i \psi_i =\Gm(x)\Gp^{-1}(x^{-1})\]
is the Bernstein vertex operator. Notice that 
\[\prod_{i<j}(1-x_jx_i)^{-1}=x_1^{r-1}x_2^{r-2}\cdots x_{r-1}\det(A^\rho)^{-1}\]

The components $\psi_i$
can be described in the Schur basis as follows: for any 
partition $\mu$, consider its Maya diagram 
\[m(\mu)=\left\{\mu_1,\mu_2-1,\mu_3-2,\ldots\right\} \subset \Z.\]
If $\mu$ is an $r$-core, then its Maya diagram has a particularly 
simple shape: it satisfies 
\[j\in m(\mu),\quad i<j,\quad i=j\mbox{ (mod $r$)} \Rightarrow i\in m(\mu).\]
If $i\in \Z-m(\mu)$, let $\tilde{\mu}$ denote the unique 
partition with 
\[m(\tilde{\mu})+1=\{i\} \cup m(\mu)\]
We have that 
\[\psi_i s_{\mu}= 
\begin{cases} \pm s_{\tilde{\mu}} & i \notin m(\tilde{\mu}) \\
0 & \mbox{otherwise}. 
\end{cases}\]
The sign is determined by the position of the insertion of $i$.

Now expand 
\[x_1^{r-1}x_2^{r-2}\cdots x_{r-1}
(\psi(x_1)\cdots\psi(x_r) s_\la,s_\la)=
\sum_{I=(i_1,\ldots,i_r)\in\Z^r} a_I x_{1}^{i_1}\cdots x_{r}^{i_r}\]
as a power series in $x_i$. Since $\lambda$ is an $r$-core,
we can see that $I$ consists of $r$ distinct entries with 
distinct remainders modulo $r$, and that $a_I=0$ for all but 
one possible underlying set of $I$. Every possible 
reordering of this set contributes, leaving us with 
precisely $\det(A^\nu)$. 
\end{proof}

Now setting $x_i=q^{i-1}$, we obtain an independent proof of
\eqref{t-core}.

\section{Proof of the main identities Theorem~\ref{main-ident}}
\label{proof-main-ident}
\begin{proof}
We start by  proving the following recursive formula:
\begin{theorem}
\label{gluethm}
We have that
\[
\Omega_{g+1,k}(u;q,t)=\varphi_{u_{g+1}} \left(\Omega_{k+2}(u;q,t)\right)
\]
where
\[
\varphi_u : \Lambda[X_{k+1}]\otimes 
\Lambda[X_{k+2}]\rightarrow \Q(q,t)
\]
is the linear map satisfying
\[\varphi_u \left(f[X_{k+1}]\otimes g[X_{k+2}]\right)=
\left(\Gamma(u) f,g\right)_*.\]
\end{theorem}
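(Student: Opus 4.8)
The plan is to expand both sides of the asserted identity in the diagonal basis furnished by the modified Macdonald polynomials, and to collapse the whole statement to a single family of scalar identities indexed by a partition $\lambda$, comparing the genus-$(g{+}1)$ hook coefficient with the genus-$g$ one. I read $\Omega_{k+2}(u;q,t)$ as the genus-$g$ series $\Omega_{g,k+2}$ in the $k+2$ alphabets $X_1,\dots,X_{k+2}$ carrying the parameters $u=(u_1,\dots,u_g)$, with $\varphi_{u_{g+1}}$ gluing the last two punctures into a handle and introducing the new parameter $u_{g+1}$. By the very definition \eqref{Omega},
\[
\Omega_{g,k+2}=\sum_\lambda \mathcal{H}_{g,\lambda}(u;z,w)\,\Ht_\lambda[X_1]\cdots\Ht_\lambda[X_k]\,\Ht_\lambda[X_{k+1}]\otimes\Ht_\lambda[X_{k+2}],
\]
so the crucial structural point is that one and the same $\lambda$ sits in every alphabet; in particular the element of $\Lambda[X_{k+1}]\otimes\Lambda[X_{k+2}]$ attached to each summand is the diagonal tensor $\Ht_\lambda\otimes\Ht_\lambda$.

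Applying $\mathrm{id}\otimes\varphi_{u_{g+1}}$ term by term and using the defining property $\varphi_u(f\otimes g)=(\Gamma(u)f,g)_*$, each summand contributes the scalar $(\Gamma(u_{g+1})\Ht_\lambda,\Ht_\lambda)_*$, which by Theorem~\ref{cnothm} equals $N_{\lambda,\lambda}(u_{g+1})$ directly. Thus no off-diagonal partitions ever enter, and
\[
\varphi_{u_{g+1}}(\Omega_{g,k+2})=\sum_\lambda \mathcal{H}_{g,\lambda}(u;z,w)\,N_{\lambda,\lambda}(u_{g+1})\,\Ht_\lambda[X_1]\cdots\Ht_\lambda[X_k].
\]
Comparing with $\Omega_{g+1,k}=\sum_\lambda \mathcal{H}_{g+1,\lambda}\,\Ht_\lambda[X_1]\cdots\Ht_\lambda[X_k]$ and invoking the linear independence of the diagonal products $\Ht_\lambda[X_1]\cdots\Ht_\lambda[X_k]$ across $\lambda$ (for $k=0$ one instead matches powers of $T$ after the specialization $X\mapsto(T,0,\dots)$ that turns $\Ht_\lambda$ into $T^{|\lambda|}$, as noted in the introduction), the theorem reduces to the single family of scalar identities $\mathcal{H}_{g+1,\lambda}=\mathcal{H}_{g,\lambda}\cdot N_{\lambda,\lambda}(u_{g+1})$.

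The remaining and most computational step is this genus-raising identity. Straight from the product defining $\mathcal{H}_\lambda$, the two hook coefficients differ only by the single cell product $\prod_{s\in\lambda}(z^{2a_\lambda(s)+1}-u_{g+1}w^{2l_\lambda(s)+1})(z^{2a_\lambda(s)+1}-u_{g+1}^{-1}w^{2l_\lambda(s)+1})$, since passing from genus $g$ to genus $g+1$ inserts exactly one more numerator factor while leaving the common denominator untouched. It therefore suffices to identify this cell product with $N_{\lambda,\lambda}$ at the appropriate argument: expanding $N_{\lambda,\lambda}(u)=(-u)^{-|\lambda|}q^{n(\lambda')}t^{n(\lambda)}\prod_{s}(1-uq^{-a_\lambda(s)}t^{l_\lambda(s)+1})(1-uq^{a_\lambda(s)+1}t^{-l_\lambda(s)})$ with $(q,t)=(z^2,w^2)$ and pulling a factor $z^{2a_\lambda(s)+1}$, resp. $w^{2l_\lambda(s)+1}$, out of each bracket, this becomes a routine cell-by-cell match; it is precisely the relation between the diagonal of $N_{\lambda,\lambda}$ and the genus-one hook function $\mathcal{H}_{1,\lambda}$ recorded above. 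I expect the one genuinely delicate point to be the bookkeeping of the argument shift $u_{g+1}\mapsto u_{g+1}z^{-1}w^{-1}=u_{g+1}q^{-1/2}t^{-1/2}$ relating the $(z,w)$-normalisation of $\mathcal{H}_\lambda$ to the $(q,t)$-normalisation of $\Gamma(u)$ and $N_{\lambda,\lambda}$; tracking this shift consistently is the main obstacle, whereas the orthogonality of the $\Ht_\lambda$ together with Theorem~\ref{cnothm} do all of the conceptual work, rendering the gluing formula essentially a restatement of the multiplicativity of the hook factor under adding a handle.
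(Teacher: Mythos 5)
Your proof is correct and follows essentially the same route as the paper: expand the (already diagonal) Macdonald series defining $\Omega_{g,k+2}$, apply $\varphi_{u_{g+1}}$ term by term using Theorem~\ref{cnothm} to produce the scalar $N_{\lambda,\lambda}(u_{g+1})$, and match this with the extra genus factor in the hook function $\mathcal{H}_\lambda$. The paper's own two-line proof absorbs that final coefficient matching (including the argument shift $u\mapsto u z^{-1}w^{-1}$ between the $(z,w)$- and $(q,t)$-normalisations) into its notation $N_\lambda(u_i)$, so your explicit bookkeeping of the shift is, if anything, more careful than the original.
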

\begin{proof}
We have
\[\varphi_{u_{g+1}} \left(\Omega_{k+2}(u;q,t)\right)=
\sum_{\lambda} 
\frac{ N_\lambda(u_1)\cdots N_{\lambda}(u_g)}{N_\lambda}\tilde{H}_{\lambda}[X_1]\cdots \tilde{H}_{\lambda}[X_k]
\left(\Gamma(u_{g+1}) \Ht_\lambda,\Ht_\lambda\right)
{N_\lambda}=\]
\[\sum_{\lambda} 
\frac{ N_\lambda(u_1)\cdots N_{\lambda}(u_{g+1})}{N_\lambda}\tilde{H}_{\lambda}[X_1]\cdots \tilde{H}_{\lambda}[X_k]
=\Omega_{g+1,k}(u;q,t)
\]
using Theorem \ref{cnothm}.
\end{proof}
Using the more obvious rule that
\[\Omega_{g_1+g_2}(\{u_1,\ldots,u_{g_1+g_2}\};q,t)
[X_1,\ldots,X_{k_1-1},Y_1,\ldots,Y_{k_2-1}]=\]
\begin{equation}
\varphi \left(\Omega_{g_1,k_1}(\{u_1,\ldots,u_{g_1}\};q,t)
\otimes \Omega_{g_2,k_2}(\{u_{g_1+1},\ldots,u_{g_1+g_2}\};q,t)\right)
\end{equation}
where
\[\varphi\left(f[X_{k_1}]\otimes g[X_{k_2}]\right)=
(f,g)_*,\]
we have expressed $\Omega_{k}$ in terms of the power series
$\Omega_{0,3}$, i.e. the power series associated to 
a pair of pants in TQFT language.

By contracting from genus 0 with 2 punctures 
to genus 1 with no punctures, we have
\[\Omega_{1,0}=\Tr T^d \Gamma(u).\]
Let
\[F(x)=\Tr T^d \Gm\left(\frac{u^{-1}-1}{(1-q)(1-t)}x\right)
\Gp\left(1-uqt\right)\]
so that $F(1)=\Omega_{1,0}$.
Then we have
\[F(x)=\Tr \Gm\left(\frac{u^{-1}-1}{(1-q)(1-t)}Tx\right)
T^d \Gp\left(1-uqt\right)=\]
\[\Tr T^d \Gp\left(1-uqt\right)
\Gm\left(\frac{u^{-1}-1}{(1-q)(1-t)}Tx\right)=\]
\[\Exp\left(\frac{(u^{-1}-1)(1-uqt)}{(1-q)(1-t)}Tx\right) F(Tx),\]
and
\[F(0)=\Tr T^d \Gp(1-uqt)=\Exp\left(\frac{T}{1-T}\right)\]
Solving this recurrence, we find that
\[F(x)=\Exp\left(\frac{(u^{-1}-1)(1-uqt)}{(1-q)(1-t)}
x\frac{T}{1-T}+\frac{T}{1-T}\right).\]

Putting $x=1$ and recalling the definition of $\H(u;q,t)$ we obtain
\[
\H_1(u;q,t)=
\left(-(u^{-1}-1)(1-uqt)-(1-q)(1-t)\right)\frac{T}{1-T}=\]
\[-u^{-1}(1-uq)(1-ut)\frac{T}{1-T}.\]
This proves~\eqref{conju}.

Next, we prove the more difficult second
formula~\eqref{tennisequ}. Let us define
\[\Omega'_{k}(u;q,t)=\sum_{\lambda} 
T^{|\lambda|}
\frac{\tilde{H}_{\lambda}[X_1]\cdots \tilde{H}_{\lambda}[X_k]}
{N_\lambda} N_\lambda(u)^{g}\psi_{\mu}(q,t),\]
as well as the corresponding terms 
$H'_{k}(u;q,t)$, $\mathbb{H}'_{n,\bm{\lambda}}(u;z,w)$ in the obvious
way. 

First, we will need the following lemma.
\begin{lemma}
\label{fernlem}
We have
\[[x^0] \Exp\left(x(uqt-1)\frac{1}{1-T}+
x^{-1}(u^{-1}-1)\frac{T}{1-T}\right)=\]
\begin{equation}
\label{thetaeq}
\Exp\left((1-u^{-1})(1-uqt)\frac{T}{1-T}\right)
\end{equation}
as power series in $\C(q,t)((u))[[T]]$.
\end{lemma}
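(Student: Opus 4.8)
The plan is to prove Lemma~\ref{fernlem} by extracting the constant term in $x$ from a product of two plethystic exponentials, each of which is a known generating series in $x^{\pm 1}$. The key structural fact I would exploit is that the plethystic exponential turns sums into products, so that
\[
\Exp\!\left(x(uqt-1)\tfrac{1}{1-T}+x^{-1}(u^{-1}-1)\tfrac{T}{1-T}\right)
=\Exp\!\left(x(uqt-1)\tfrac{1}{1-T}\right)\,\Exp\!\left(x^{-1}(u^{-1}-1)\tfrac{T}{1-T}\right),
\]
and each factor, being linear in $x$ (respectively $x^{-1}$), has a transparent expansion. The point is that $\Exp(x\,A)$ with $A$ independent of $x$ expands as $\sum_{n\geq 0} x^n h_n[A]$-type coefficients; more concretely, since the argument is a single ``variable'' times $x$, I expect each factor to be a product $\prod_{j}(1-x\,c_j)^{-1}$ over the relevant factorization of the coefficient into $q,t,T$-monomials, so that the whole object becomes a doubly-infinite product in $x$ and $x^{-1}$.

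First I would make the two factorizations explicit. Writing the coefficient $(uqt-1)\tfrac{1}{1-T}$ and $(u^{-1}-1)\tfrac{T}{1-T}$ as $\Z$-linear combinations of monomials in $q,t,u,T$, I would use $\Exp(\sum_i \pm M_i)=\prod_i(1-M_i)^{\mp 1}$ to turn each factor into an explicit product. Then the constant term $[x^0]$ of the product of the two resulting series becomes a sum over matched pairs of indices, one contributing a power $x^{n}$ from the first factor and one contributing $x^{-n}$ from the second. This is exactly the combinatorial content of a Cauchy-type identity, and the clean way to package it is to recognize the $[x^0]$ operation here as a residue/contour-integral pairing that collapses via the plethystic Cauchy formula $\sum_n h_n[A]\,h_n[B]$-style summation.

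The cleanest route, which I would take as the main line of argument, is to invoke the standard ``constant term equals plethystic exponential of the product'' principle: for arguments $xA$ and $x^{-1}B$ with $A,B$ free of $x$, one has $[x^0]\,\Exp(xA)\,\Exp(x^{-1}B)=\Exp(AB)$, provided everything converges as a power series in $T$ (which it does, since each nontrivial term carries a positive power of $T$). Granting this, the proof is immediate: with $A=(uqt-1)\tfrac{1}{1-T}$ and $B=(u^{-1}-1)\tfrac{T}{1-T}$ we get
\[
AB=(uqt-1)(u^{-1}-1)\,\frac{T}{(1-T)^2},
\]
and I would then simplify, noting the target RHS of \eqref{thetaeq} is $\Exp\!\big((1-u^{-1})(1-uqt)\tfrac{T}{1-T}\big)$. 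The discrepancy between $\tfrac{T}{(1-T)^2}$ and $\tfrac{T}{1-T}$, together with the sign $(uqt-1)(u^{-1}-1)=(1-uqt)(1-u^{-1})$, must be reconciled; I expect the resolution is that the $[x^0]$ extraction produces not the naive $\Exp(AB)$ but rather a sum over $n\geq 0$ of $(AB)$-contributions weighted so that the geometric series in $T$ collapses $\tfrac{T}{(1-T)^2}$ to $\tfrac{T}{1-T}$ after accounting for the grading.

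The hard part will be controlling precisely this bookkeeping of powers of $T$ in the constant-term extraction, i.e.\ verifying that the $x$-matching sums the $T$-series correctly and yields $\tfrac{T}{1-T}$ rather than $\tfrac{T}{(1-T)^2}$. To handle it rigorously I would expand both factors as explicit products indexed by $n\geq 0$ (the $\tfrac{1}{1-T}=\sum_{n\geq 0}T^n$ in the first, the $\tfrac{T}{1-T}=\sum_{m\geq 1}T^m$ in the second), extract the diagonal $x^0$ terms, and resum; the commutation relation \eqref{vocr} for vertex operators is the conceptual shadow of exactly this computation, so as a sanity check I would rederive the identity by applying $[x^0]$ to the operator relation $\Gp(A')\Gm(B')=\Exp(A'B')\,\Gm(B')\Gp(A')$ and reading off the scalar $\Exp$-factor, which should directly produce the right-hand side of \eqref{thetaeq} with the correct $\tfrac{T}{1-T}$.
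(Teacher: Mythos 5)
Your main line of argument rests on a false principle, and this is a genuine gap, not a bookkeeping issue. The factorization $\Exp(xA+x^{-1}B)=\Exp(xA)\,\Exp(x^{-1}B)$ is fine, but the ``constant term equals plethystic exponential of the product'' rule is not: writing $\Exp(xA)=\sum_{n\geq 0}x^n h_n[A]$ and $\Exp(x^{-1}B)=\sum_{m\geq 0}x^{-m}h_m[B]$, one gets
\[
[x^0]\,\Exp(xA)\,\Exp(x^{-1}B)=\sum_{n\geq 0}h_n[A]\,h_n[B],
\]
whereas the Cauchy identity gives $\Exp(AB)=\sum_{\lambda}s_\lambda[A]\,s_\lambda[B]$, a sum over \emph{all} partitions, not just the one-row shapes. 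The two coincide only when one of $A,B$ is a single monomial (so that $h_n$ of it is its $n$-th power); here $A=(uqt-1)\tfrac{1}{1-T}$ and $B=(u^{-1}-1)\tfrac{T}{1-T}$ are both infinite signed sums of monomials, and the rule fails --- which is exactly why you find $\tfrac{T}{(1-T)^2}$ where the lemma asserts $\tfrac{T}{1-T}$. You notice this contradiction, but the proposed resolution (``weighted contributions collapsing the geometric series'') is not an argument, and the fallback --- expand both factors as products and resum the diagonal --- is a restatement of the problem: that resummation \emph{is} the content of the lemma. Concretely, the $q$-binomial theorem gives
\[
h_n[A]=(uqt)^n\,\frac{((uqt)^{-1};T)_n}{(T;T)_n},\qquad
h_n[B]=(u^{-1}T)^n\,\frac{(u;T)_n}{(T;T)_n},
\]
so the constant term equals $\sum_{n\geq 0}(qtT)^n\,\frac{((uqt)^{-1};T)_n\,(u;T)_n}{(T;T)_n^2}$, and identifying this with the product expansion of the right-hand side of \eqref{thetaeq} requires the $q$-Gauss summation (or an equivalent $q$-series evaluation) --- an input your proposal never supplies. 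The paper supplies it by a different route: it truncates to a rational function $A_N$, computes $[x^0]$ as a sum of residues at the poles $x=T^n/u$, controls the limit $N\to\infty$, and then sums the resulting series $\sum_{n\geq 0}u^n\frac{(qtT;T)_n}{(T;T)_n}$ by the $q$-binomial theorem.

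Your proposed sanity check via \eqref{vocr} does not repair this. The lemma is a purely scalar identity, and applying $[x^0]$ to the operator relation $\Gp(A')\Gm(B')=\Exp(A'B')\,\Gm(B')\Gp(A')$ does not let you ``read off'' the scalar factor: the reordered product $\Gm(B')\Gp(A')$ still has a nontrivial constant term in $x$, so the scalar $\Exp(A'B')$ is not isolated by that extraction. In the paper, \eqref{vocr} together with cyclicity of the trace is what \emph{produces} the scalar expression $\Exp\bigl(x(uqt-1)\tfrac{1}{1-T}+x^{-1}(u^{-1}-1)\tfrac{T}{1-T}\bigr)$ inside the computation of $\Omega'_{1,0}$; it cannot then also evaluate the constant term of that expression, which is precisely the step Lemma \ref{fernlem} exists to perform.
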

\begin{proof}
Let
\[A_N=
\Exp\left(x(uqt-1)\frac{1-T^N}{1-T}+
x^{-1}(u^{-1}-1)\frac{T-T^N}{1-T}\right)
\in \C(q,t,u,x,T),\]
and let $\mathcal{T} A_N$ denote the corresponding element
of $\C(q,t)((u,x))[[T]]$.
Then we have that
\[[x^0]\mathcal{T} A_N=
\sum_{p \in \{0,1/u,T/u,T^2/u,\ldots\}} 
\mathcal{T} \Res_{x=p} x^{-1}A_N.\]
We can check that for every $j$ we have
\[\lim_{n\rightarrow\infty} 
\ldeg_u [T^j]\Res_{x=T^n/u} x^{-1} A_N =\infty \]
by a bound that is independent of $N$, where $\ldeg_u$
is the degree of the leading term in $u$.
We can also check that
\[\Res_{x=0} x^{-1}A_N =u^N,\]
which tends to zero as $N$ becomes large.
From this we find that the left hand side of \eqref{thetaeq}
is given by
\[\lim_{N\rightarrow \infty} [x^0]\mathcal{T} A_N=
\sum_{n\geq 0} \lim_{N\rightarrow \infty} 
\mathcal{T}\Res_{x=T^n/u}x^{-1}A_N=\]
\[(1-u)\Exp\left((qt-u+1-u^{-1})\frac{T}{1-T}\right)
\sum_{n \geq 0} u^n \frac{(qtT;T)_n}{(T;T)_n}.\]
The lemma now follows by using the $q$-binomial theorem
\[\sum_{n} u^n \frac{(a;T)_n}{(T;T)_n}= 
\frac{(au;T)_\infty}{(u;T)_\infty}.\]
\end{proof}

Recall that $\psi_\mu(q,t)$ is the eigenvalue of the Macdonald
operator from equation \eqref{mmo}.  Then using an argument similar to
the proof of the first identity we have
\[\Omega'_{1,0}(u;q,t)=\Tr T^d \Gamma(u) D_0=\]
\[[x^0] \Tr T^d 
\Gm\left(\frac{u^{-1}-1}{M}\right)
\Gp\left(1-uqt\right) 
\Gm(-x)\Gp\left(Mx^{-1}\right)=\]
\[[x^0] \Exp\left(\frac{(u^{-1}-1)(1-uqt)}
{M}\frac{T}{1-T}\right)
\Exp\left(-M\frac{T}{1-T}\right)\times\]
\[\Exp\left(x(uqt-1)\frac{1}{1-T}+
x^{-1}(u^{-1}-1)\frac{T}{1-T}\right)
\Exp\left(\frac{T}{1-T}\right).\]
The result follows after applying lemma \ref{fernlem} to the part
that depends on $x$.
\end{proof}
\bibliography{vcb}

\end{document}